\newtheorem{theorem}{Theorem}[section]
\newtheorem{lemma}[theorem]{Lemma}
\newtheorem{prop}[theorem]{Proposition}
\newtheorem{rem}[theorem]{Remark}
\newtheorem{coro}[theorem]{Corollary}
\newtheorem{defi}[theorem]{Definition}
\newcommand{\sgn}{\operatorname{sgn}}
\newcommand{\supp}{\operatorname{supp}}
\newcommand{\mS}{\mathcal{S}}
\newcommand{\mB}{\mathcal{B}}
\newcommand{\mC}{\mathcal{C}}
\newcommand{\mE}{\mathcal{E}}
\newcommand{\mR}{\mathcal{R}}
\newcommand{\mF}{\mathcal{F}}
\newcommand{\mD}{\mathcal{D}}
\newcommand{\bH}{\mathbb{H}}
\newcommand{\mL}{\mathcal{L}}
\newcommand{\mK}{\mathcal{K}}
\newcommand{\rN}{\mathbb{R}}
\newcommand{\mT}{\mathcal{T}}
\newcommand{\mO}{\mathcal{O}}
\newcommand{\uS}{\mathbb{S}}
\newcommand{\intl}{\int\limits}
\newcommand{\cl}{\operatorname{cl}}
\newcommand{\Llg}{\Lambda}
\newcommand{\ag}{\alpha}
\newcommand{\bg}{\beta}
\newcommand{\ve}{{\bf e}}
\newcommand{\sg}{\sigma}
\newcommand{\Og}{\Omega}
\newcommand{\ga}{\gamma}
\newcommand{\pdh}{\partial}
\newcommand{\cT}{\mathbb{T}}
\newcommand{\Cf}{\mathfrak{C}}
\title[Artifacts in Limited Data Tomography]{How Strong Are Streak Artifacts in Limited Angle Computed Tomography?}
\author{Linh V. Nguyen}
\thanks{The research is supported by the NSF grant DMS 1212125}
\address{Department of Mathematics, University of Idaho, Moscow, Idaho 83844, USA}
\email{lnguyen@uidaho.edu}
\begin{document}
\begin{abstract} In this article, we consider the limited angle problem in computed X-ray tomography. A common practice is to use the filtered back projection with the limited data. However, this practice has been shown to introduce the artifacts along some straight lines. In this work, we characterize the strength of these artifacts. 
\end{abstract}

\maketitle

\section{Introduction}\label{S:intro}
Computed X-ray tomography (CT) is probably the best known imaging modality. The object of interest is scanned with X-rays and the loss of intensity along the rays provides the data for the image reconstruction. Roughly speaking, the obtained data is the Radon transform $\mR f$ of the attenuation function $f$:
$$\mR f(\theta, s) = \int\limits_{\rN} f(s \, \theta + t \, \theta^\perp) \, dt,$$ where $s \in \rN$, $\theta \in \uS^1$, and $\theta^\perp$ is the unit vector $\frac{\pi}{2}$ counterclockwise from $\theta$. In order to reconstruct the image (i.e., the function $f$), one has to invert the Radon transform. This problem has attracted a significant amount of work in literature (see, e.g., \cite{Nat} and the reference therein).

\medskip

Let us briefly introduce the well known filtered-backprojection formula to invert $\mR$ \footnote{This work is motivated by \cite{FQ13} and we borrow the basic notations from there.}. We recall the $1$-dimensional Fourier transform $\mF$ and its inverse $\mF^{-1}$
\begin{eqnarray*}
\mF(g)(\tau) &=& \frac{1}{(2 \pi)^{1/2}} \int\limits_{\rN}  \, e^{-i \tau \, s} g(s) \, ds,\\
\mF^{-1}(g)(s)& =&\frac{1}{(2 \pi)^{1/2}} \int\limits_{\rN}  \, e^{i \tau \, s} g(\tau) \, d\tau.
\end{eqnarray*}
Let $g(\theta, s) \in \mS(\uS^1 \times \rN)$. The one dimensional Lambda operator is 
$$\Llg_s g= \mF^{-1}_\tau (|\tau| \, \mF_s g). $$
We also define the back-projection operator
$$\mR^*(g)(x) = \int\limits_{\uS^1} g(\theta, x \cdot \theta) \, d\theta.$$
Then, the well known filtered back-projection inversion formula for $\mR$ reads as (e.g., \cite{Nat})
\begin{equation} \label{E:Exact} f  = \mB \mR f := \frac{1}{4 \pi} \mR^* \Lambda_s \mR f.\end{equation}


\medskip

One major disadvantage of the above reconstruction formula is that it is not local (due to the non-locality of the operator $\Lambda_s$). That is, in order to find $f$ at a location $x$, one has to use the data at {\bf all} angles $\theta$ and distances $s$. Lambda tomography has been proposed to overcome this advantage. Namely, let us consider the following Lambda reconstruction formula
\begin{equation} \label{E:Lambda} \Lambda f := \mL \,\mR f := \frac{1}{4 \pi} \mR^*(-\pdh_s^2) \mR f .\end{equation}
This formula is local in the sense that in order to compute $\Lambda f(x)$, one only needs to use the data $\mR f$ in an arbitrarily small neighborhood of the set $$\{(\theta,s): \theta \cdot x = s\}.$$

\medskip

Although $\Lambda f$ is not equal to the image $f$, it is a good alternative for $f$, as follows. Let us recall the $2$-dimensional Fourier transform and its inverse
\begin{eqnarray*}
\mF(f)(\xi) &=& \hat{f}(\xi) = \frac{1}{2 \pi} \int\limits_{\rN^2}  \, e^{-i x \cdot \xi} f(x) \, dx,\\
\mF^{-1}(f)(x)& =& \check{f}(\xi) = \frac{1}{2 \pi} \int\limits_{\rN^2}  \, e^{i x \cdot \xi} f(\xi) \, d\xi.
\end{eqnarray*}
Then (see, e.g., \cite{smith1985mathematical})
\begin{equation} \label{E:Lam} \Lambda f  = \mF^{-1} \big(|\xi| \mF(f) \big). \end{equation}
That is (see Section \ref{S:pdo}), $\Lambda$ is a pseudo-differential operator of order one with the symbol $$\sg(x,\xi) \sim |\xi|, \mbox{ for large } |\xi|.$$  This, in particular, implies (see, e.g., \cite[Corollary 8.3.2]{Ho1-old})
$$WF(\Lambda f) = WF(f).$$ 
Here, $WF(f)$ and $WF(\Lambda f)$ are, respectively, the wave front set of $f$ and $\Lambda f$ (see Definition \ref{D:wave}). As explained in Section \ref{S:MA}, the wave front set can be used to describe the singularities of a function (or, more generally, a distribution). Moreover, (\ref{E:Lam}) implies that: $WF_s(f) = WF_{s-1}(\Llg f)$ (see Section~\ref{S:Sobolev} for more discussion). That is, Lambda reconstruction emphasizes the singularities (image features) by one order. In some applications, such as edge detection, this is an advantage of Lambda reconstruction over the exact reconstruction. Therefore, one may concentrate on finding $\Lambda f$ (which is simpler to compute) instead to $f$. More discussion about Lamda tomography can be found in, e.g., \cite{vainberg1981reconstruction,smith1985mathematical,Fari92,Fari97}. The reader is also referred to \cite{KLM,RamKat-Book} for other kinds of local tomography.

\medskip

In this article, we are interested in both exact construction formula \eqref{E:Exact} and the Lambda reconstruction formula \eqref{E:Lambda}. 

\medskip

Let us now turn our discussion to the main concern to this article, {\bf the limited angle problem} (see, e.g., \cite{RamKat-AML1,RamKat-AML2,KLM,Kat-JMAA,FQ13,Kuchment-Book}): \emph{$\mR f$ is only known for $(s,\theta) \in \rN \times \uS_\Phi$, for some $0<\Phi<\frac{\pi}{2}$.} Here, $\uS_\Phi \subsetneq \uS^1$ is defined by: 
$$\uS_\Phi=\{\theta \in \uS^1: \theta = \pm (\cos \phi, \sin \phi),\, |\phi| < \Phi\}.$$ 

\medskip

\noindent The reconstruction of $f$ from the limited data problem is severely ill-posed. Instead of trying to reconstruct the exact value of $f$, a common practice is to reconstruct the {\bf visible} singularities of $f$; which are all the elements $(x,\xi) \in WF(f)$ such that $\frac{\xi}{|\xi|} \in \uS_\Phi$. The reader is referred to \cite{FQ13} for more discussion about the visible singularities. 

\medskip

Let us define the following limited angle version of $\mR^*$:
$$\mR_\Phi^*(g)(x) = \int\limits_{\uS_\Phi} g(\theta, x \cdot \theta) \, d\theta,$$
and limited angle version of $\mB$ and $\mL$: 
\begin{eqnarray*} \mB_\Phi g &=& \frac{1}{4 \pi} \mR^*_\Phi \Lambda_s g,\\
\mL_\Phi \,g &=& \frac{1}{4 \pi} \mR_\Phi^* \big(-\frac{\pdh^2}{\pdh s^2} g \big).
\end{eqnarray*}

One can observe that $\mB_\Phi$ (or $\mL_\Phi$) is equal to applying $\mB$ (respectively $\mL$) to the limited data patched with zero outside the available range. It is shown in \cite{FQ13,Kat-JMAA} that $\mB_\Phi \mR$ and $\mL_\Phi \mR$ reconstruct the visible singularities of $f$. However, they also create added singularities (artifacts) into the picture. 
These artifacts are generated from the singularities of $f$ whose direction corresponds to the edges of $\uS_\Phi$ (i.e., singularities along the direction $\ve_1=(\cos \Phi, \sin \Phi)$ or $\ve_2=(\cos \Phi, -\sin \Phi)$). Their locations line up along straight lines orthogonal to their direction ($\ve_1$ or $\ve_2$) and, hence, called streak artifacts (the reader is referred to \cite{Kat-JMAA,FQ13} for detailed discussion).

\medskip

In this article, we characterize the strength of above mentioned artifacts. In fact, we will analyze those generated by the general operators $\mB_\Phi \mK \mR$ and $\mL_\Phi \mK \mR$. Here, $\mK$ is the operator that multiplies by $\kappa$,
$$\mK g(\theta,s) = \kappa(\theta) g(\theta,s),$$ where \begin{equation} \label{E:kappa} \kappa: \uS^1 \to \rN \mbox{ is a smooth even function such that } \kappa(\theta)>0 \mbox{ for all } \theta \in \uS_\Phi.\end{equation} Obviously, if $\kappa \equiv 1$ then $\mB_\Phi \mK \mR= \mB_\Phi \mR$ and $\mL_\Phi \mK \mR=\mL_\Phi  \mR$. Our results (see Theorem~\ref{T:Main1} and Section~\ref{S:Artifacts}), in particular, show that when $\kappa$ vanishes to order $k$ at the end points $(\pm \cos \Phi, \pm \sin \Phi)$ of $\uS_\Phi$, then the artifacts are reduced by $k$ orders.

\medskip

It is worth mentioning that the same problem has been studied in \cite{Kat-JMAA}. However, our approach and result are different from there. In particular, our result applies to general singularities, not only jumps.

\medskip

The article is organized as follows. In Section \ref{S:MA}, we introduce some basic concepts in microlocal analysis needed in this article. We then state the main result, Theorem~\ref{T:Main1}, and its consequences in Section \ref{S:Main}. Sections \ref{S:Model} and \ref{S:Proof} are dedicated to the proof of Theorem~\ref{T:Main1}. 

\section{Basic notions in microlocal analysis} \label{S:MA}

In this section we introduce several concepts in microlocal analysis.  We first discuss the definition of the wave front set and how to quantify it. We then provide some essential knowledge in pseudo-differential operators and Fourier integral operators, that is needed to understand the article.

Let $\Og \subset \rN^n$ be an open set. Throughout this article, we will denote by $\mD'(\Og)$ and $\mE'(\Og)$ the space of distributions and space of compactly supported distributions on $\Og$, respectively. 

\subsection{Wave front set} \label{S:wave}
Here is the definition of the wave front set of a distribution:
\begin{defi} [Wave Front Set \cite{hormander71fourier}]  \label{D:wave} Let $\Og \subset \rN^n$ be an open set, $u \in \mD'(\Og)$, and $(x_0,\xi_0) \in \cT^*\Og \setminus 0$ \footnote{$\cT^*\Og \setminus 0$ is the cotangent bundle of $\Og$ minus the zero section. It can be considered as $\Og \times (\rN^n \setminus 0)$.}. Then, $u$ is microlocally smooth at $(x_0,\xi_0)$ if there is a function $\varphi \in C_0^\infty(\Og)$ satisfying $\varphi(x_0) \neq 0$ and an open cone $V$ containing $\xi_0$, such that $\mF (\varphi f)$ is rapidly decreasing in $V$. That is, for any $N>0$,  there exists a constant $C_N$ such that
$$|\mF(\varphi u)(\xi)| \leq C_N (1+|\xi|)^{-N}, \mbox{ for all } \xi \in \rN^n.$$ The {\bf wave front set} of $u$, denoted by $WF(u)$, is the complement of the set of all $(x_0,\xi_0) \in \cT^* \Og$ where $u$ is microlocally smooth.
\end{defi} 

\medskip

\noindent An element $(x,\xi) \in WF(u)$ is called a singularity of $u$. The component $x$ indicates the location of the singularity, while $\xi$ indicates the direction of the singularity. For example, if $u$ is the characteristic function of an open set $\mO \Subset \Og$ with the smooth boundary $\pdh \mO$, then $(x,\xi) \in WF(u)$ if and only if $x \in \partial \mO$ and $\xi$ is perpendicular to the tangent plane of $\partial \mO$ at $x$. Detailed discussion can be found in \cite{Petersen-Book}.

\medskip

The study of the reconstruction of wave front set  in limited data X-ray and related transforms was initiated in \cite{Quinto-88,GrUDuke,GrU-Functional,GrUCon,QT93}. Similar study has become popular in many areas of imaging sciences.

\medskip

In this article, to study the relationship between $WF( \mB_\Phi \mK \mR f)$, $WF(\mL_\Phi \mK \mR f)$ and $WF(f)$, we will intensively use the following rule for calculus of wave front set (see, \cite[Theorem 2.5.14] {hormander71fourier}):

\begin{theorem} \label{T:wave-Ho} Let $\mT: \mE'(\rN^2) \to \mD'(\rN^2)$ be a continuous linear operator whose Schwartz kernel $\mu$ satisfies $WF(\mu)' \subset (\cT^* \rN^2 \setminus 0) \times (\cT^* \rN^2 \setminus 0)$. Then for any $f \in \mE'(\rN^2)$, $$WF(\mT) \subset WF(\mu)' \circ WF(f).$$
\end{theorem}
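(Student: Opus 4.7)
The approach is H\"ormander's standard microlocal cutoff argument, which I would adapt as follows. Fix $(x_0,\xi_0) \in \cT^*\rN^2 \setminus 0$ lying outside $WF(\mu)' \circ WF(f)$; the goal is to exhibit a cutoff $\varphi \in C_0^\infty(\rN^2)$ with $\varphi(x_0)\neq 0$ and an open cone $V$ containing $\xi_0$ such that $\mF(\varphi \mT f)$ is rapidly decreasing on $V$.

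First I would unpack the hypothesis. By the definition of the composition, for every $y \in \rN^2$ and every $\eta \neq 0$ either $(y,\eta) \notin WF(f)$, or $((x_0,\xi_0),(y,\eta)) \notin WF(\mu)'$. Combining this with the closedness of wave front sets and the compactness of $\supp f$, one can construct a small cone $V$ around $\xi_0$, a neighborhood $U$ of $x_0$, a neighborhood $W$ of $\supp f$, and a finite open conic covering $\{V_j\}$ of $\rN^2 \setminus 0$ with the following property: for each $j$, either $\widehat{\psi f}$ is rapidly decreasing on $V_j$, or $\widehat{K}(\xi,-\eta)$ is rapidly decreasing for $\xi \in V$ and $\eta \in V_j$, where $K(x,y):=\varphi(x)\mu(x,y)\psi(y)$ with cutoffs chosen below. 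The assumption $WF(\mu)' \subset (\cT^*\rN^2 \setminus 0) \times (\cT^*\rN^2 \setminus 0)$ is essential here: it forbids ``escaping'' along $\xi = 0$ or $\eta = 0$ directions, so that the boundary cases $|\eta|\to 0$ and $|\eta|\to \infty$ can both be absorbed into this covering.

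Next, with $\varphi \in C_0^\infty(U)$ equal to $1$ near $x_0$ and $\psi \in C_0^\infty(W)$ equal to $1$ on $\supp f$, I would observe that $\varphi\, \mT f = \langle K(\cdot,y),\, f(y)\rangle$, whence by Fourier inversion
\begin{equation*}
\mF(\varphi \mT f)(\xi) \;=\; \int \widehat{K}(\xi,-\eta)\, \widehat{\psi f}(\eta)\, d\eta,
\qquad K \in \mE'(\rN^2\times\rN^2).
\end{equation*}
Splitting the $\eta$-integral along $\{V_j\}$, each piece is controlled by pairing the rapid-decay estimate on one factor against the at-most-polynomial growth of the other; summing over $j$ yields rapid decay of $\mF(\varphi \mT f)(\xi)$ on $V$, which is exactly the microlocal smoothness of $\mT f$ at $(x_0,\xi_0)$.

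The hardest technical point, and where I would expect to spend most of the care, is the \emph{joint} rapid decay of $\widehat{K}(\xi,-\eta)$ in $(\xi,\eta)$ on a product cone $V \times V_j$, as opposed to separate rapid decay in each variable. This uniformity is precisely what the non-vanishing condition on $WF(\mu)'$ delivers, via the characterization of $WF(K)$ through seminorm estimates on $\widehat{K}$ on conic subsets of $\rN^2\times\rN^2 \setminus 0$; without excluding the degenerate directions $(\xi,0)$ and $(0,\eta)$, the pairing integral above need not even converge absolutely, so the hypothesis is both necessary and sufficient for the argument to close.
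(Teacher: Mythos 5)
The paper does not prove Theorem~\ref{T:wave-Ho}: it is stated verbatim as a citation to H\"ormander's Theorem~2.5.14 in \cite{hormander71fourier}, so there is no in-paper argument to compare against. Your sketch is a faithful reconstruction of H\"ormander's own proof: cut the kernel to $K(x,y)=\varphi(x)\mu(x,y)\psi(y)$, express $\mF(\varphi\,\mT f)(\xi)$ as the pairing $\int\widehat{K}(\xi,-\eta)\,\widehat{\psi f}(\eta)\,d\eta$, cover $\eta$-space by finitely many cones on each of which one factor is rapidly decreasing while the other is polynomially bounded, and conclude rapid decay of the product. You correctly locate the role of the hypothesis $WF(\mu)'\subset(\cT^*\rN^2\setminus 0)\times(\cT^*\rN^2\setminus 0)$: it is exactly what closes the conic covering near the degenerate directions $(\xi,0)$ and $(0,-\eta)$ in $\rN^4\setminus 0$ and thereby gives absolute convergence of the pairing. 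The point your sketch rightly flags but does not execute is the passage to \emph{joint} rapid decay of $\widehat{K}$ on genuine (not merely product) conic neighborhoods in $\rN^4\setminus 0$, which needs a compactness argument over $y\in\supp\psi$ and a covering that also handles the regimes $|\xi|/|\eta|\to 0$ and $|\xi|/|\eta|\to\infty$; this is precisely where the zero-section hypothesis is consumed. In short, your plan is sound and matches the standard proof, but since the paper imports the result rather than reproving it, there is no competing route to contrast.
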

\noindent Here, $WF(\mu)'$ is the {\bf twisted wave front set} of $\mu$, defined by 
 $$WF(\mu)'=\{(x,\xi; y,- \eta):  (x,\xi;y,\eta) \in WF(\mu)\},$$
and, for any $A \subset \cT^* \rN^2 \times \cT^* \rN^2$,
$$A \circ WF(f):=\{(x,\xi): (x,\xi; y, \eta) \in A, \mbox{ for some } (y,\eta) \in WF(f)\} .$$

\subsection{Sobolev wave front set (singularities).} \label{S:Sobolev}
\noindent An important issue in imaging sciences is how to quantity the strength of a singularity. The following definition can be used to serve that purpose:
\begin{defi}[Sobolev Wave Front Set \cite{Petersen-Book}] \label{D:waves} Let $\Og \subset \rN^n$ be an open set, $u \in \mD'(\Og)$, and $(x_0,\xi_0) \in \cT^*\Og \setminus 0$. Then $u$ is in the space $H^s$ microlocally at $(x_0,\xi_0)$ if there is a function $\varphi \in C_0^\infty(\Og)$ satisfying $\varphi(x_0) \neq 0$ and a function $p(\xi)$ homogeneous of degree zero and smooth on $\rN^n \setminus 0$ with $p(\xi_0) \neq 0$, such that $$p(\xi) \, \mF (\varphi u)(\xi) \in L^2(\Og, (1+|\xi|^2)^s).$$
The $H^s$-wave front set of $u$, denoted by $WF_s(u)$, is the complement of the set of all $(x_0,\xi_0) \in \cT^* \Og$ where $u$ is not microlocally in the space $H^s$. 
\end{defi} 

The notion of Sobolev wave front set has been used in imaging sciences to indicate the strength of singularities (see, e.g., \cite{QT93,QuintoSONAR,Quinto-Rull-13}). We will use it to analyze the strength of the reconstructed singularities and artifacts generated by $\mT_m$. The reader should keep in mind that, roughly speaking, the smaller $s$ is, the {\bf rougher} (i.e., {\bf stronger}) a singularity $(x,\xi) \in WF_s(u)$ is. To compare two singularities $(x,\xi) \in WF(u_1)$ and $(y,\eta) \in WF(u_2)$ \footnote{The functions $u_1$, $u_2$ do not have to be defined on the same domain.}, one can make use of the following terminologies:
\begin{itemize}
\item[i)] $(x,\xi)$ and $(y,\eta)$ are of {\bf the same order}, if for all $s \in \rN$: $(x,\xi) \in WF_s(u_1)$ iff $(y,\eta) \in WF_s(u_2)$. 
\item[ii)] $(x,\xi)$ is {\bf stronger} than $(y,\eta)$, if there is $s \in \rN$ such that $(x,\xi) \in WF_s(u_1)$ but $(y,\eta) \not \in WF_s(u_2)$.
 \end{itemize}
Since for any $u \in \mD'(\Og)$ (see, e.g., \cite{Petersen-Book}) $$\bigcup_{s \in \rN} WF_s(u) = WF(u),$$ the above terminologies can be used to compare any two singularities. 
\medskip


\subsection{Conormal singularities.} \label{S:Co} In this section, we introduce a special kind of singularities, the conormal singularities, and how to quantify them. In many cases, concentrating on the conmornal singularities can be beneficial for the analysis. In-depth discussion of conormal singularities and their applications in inverse scattering and wave propagation can be found in \cite{GrU-Functional,Joshi,Suresh,deHoop-Uhlmann}). Its use in studying the artifacts of the x-ray transform in $\rN^3$ with sources on a curve was suggested in \cite{FLU}. Our introduction below only touches the surface of the topic and is designed to serve our presentation in Section~\ref{S:Main}.

\medskip

Assume that $S \subset \Og$ be a smooth surface of co-dimension $k$. Let $h \in C^\infty(\Og,\rN^k)$ be a defining function for $S$ with $rank(d h) = k$ on $S$. The class $I^r(S)$ consists of the distributions which locally can be written down as a finite sum of 
$$u(x) = \intl_{\rN^k} e^{i h(x) \cdot \theta} a(x,\theta) \, d\theta, $$
where $a \in S^r(\Og \times \rN^k)$ (see Section \ref{S:symbol} for the definition of $S^r(\Og \times \rN^k)$). We note that if $u \in I^r(S)$, then $WF(u) \subset N^* S$ (see, e.g, \cite{hormander71fourier}), where $N^*S$ is the conormal bundle of $S$.

\medskip

The order $r$ is a good indication for the strength of the singularities of $u \in I^r(S)$. For example, assume that $S$ is a smooth hypersurface in $\rN^n$ with non-vanishing Gaussian curvature. If $u$ is a smooth density on $S$, then $u \in I^0(S)$ (see, \cite[Sections 5.7]{Stein-93}). Meanwhile while, if $u$ has Heaviside-type (i.e., jump) singularities at $S$, then  $u \in I^{-1}(S)$ (see \cite[6.14]{Stein-93}). 

\medskip

In general, the {\bf smaller} $r$ is the {\bf smoother} (i.e., {\bf weaker}) the singularities of $u \in I^r(S)$ are.

\medskip

In this article, we are only interested in the case $S$ is a smooth curve in $\rN^2$. To fix our terminology, we introduce the following definition:
\begin{defi} Let $f \in \mD'(\rN^2)$. We say that $(x_0,\xi_0) \in WF(f)$ is a conormal singularity of order $r$ along the curve $S$ if there is $u \in I^r(S)$ such that
$$(x_0,\xi_0) \not \in WF(f-u). $$
\end{defi}
One can use the order $r$ to compare two conormal singularities $(x,\xi) \in WF(f_1)$ (along the curve $S_1$) and $(y,\eta) \in WF(f_2)$ (along the curve $S_2$), where $f_1,f_2$ are two distributions on $\rN^2$. For example, $(x,\xi)$ is {\bf weaker} than $(y,\eta)$, if there is $r \in \rN$ such that $(x,\xi)$ is of order $r$ while $(y,\eta)$ is not. 

\subsection{Pseudo-differential operators and Fourier integral operators}
In this section, we make a brief introduction to pseudo-differential operators and Fourier integral operators. The reader is referred to  \cite{hormander71fourier,Ho1-old,TrPseu,TrFour,Duistermaat}, among others, for a comprehensive introduction to the topics. The use of these operators to study the X-ray transform and its generalizations was introduced by Guillemin and Sternberg (see, e.g., \cite{Guillemin-Book,Guillemin-Sternberg}). It now becomes a standard technique in geometric integral transforms and tomography (see, e.g., \cite{Quinto-88,GrUDuke,GrU-Functional,GrUCon,QT93,louis00local}).

\medskip
One significant progress in microlocal analysis is theory of pseudo-differential operators with singular symbols, developed by Uhlmann, Guillemin, Melrose, and others (see, e.g., \cite{MU,GU,AnUhl}). The use of that theory to analyze the X-ray transform, when the canonical relation is not a local canonical graph, was pioneered by  Greenleaf and Uhlmann \cite{GrUDuke,GrUCon}. It has been then exploited intensively to analyze other imaging scenarios (e.g., \cite{FLU,NoChe,FeleaCPDE,FeleaQuinto,Suresh,FeleaSAR,Am-singular}). Although we do not directly use that theory in this article, it strongly influences our analysis.

\medskip

In the rest of this section, we do not attempt to provide the reader with an overview of pseudo-differential and Fourier integral operators. Instead, we only present the knowledge essential to understand our results.

\medskip

\subsubsection{\bf The symbol classes} \label{S:symbol} Let us start with the definition of the class $S^m(\Og \times \rN^N)$  of symbols (see, e.g., \cite{hormander71fourier}):
\begin{defi} Let $\Og \subset \rN^n$ be an open set. The space $S^m(\Og \times \rN^N)$ consists of all functions $a \in C^\infty(\Og \times (\rN^N \setminus 0))$ such that for any multi-indices $\ag,\bg$ and $K \Subset \Og$, there is a positive constant $C_{\ag,\bg,K}$ such that
\begin{equation} \label{E:sym-in} |\pdh_x^\ag \pdh_\xi^\bg a(x,\xi)| \leq C_{\ag,\bg,K} (1+ |\xi|)^{m -|\ag|}, \quad \mbox{ for all } (x,\xi)  \in K \times (\rN^N \setminus 0).\end{equation}
The elements of $S^m(\Og \times \rN^N)$ are called symbols of order $m$. 
\end{defi}

We will also denote $$S^{-\infty}(\Og \times \rN^N) = \bigcap_{m \in \rN} S^m(\Og \times \rN^N).$$
Let $a(x,\xi)$ and $a'(x,\xi)$ be two symbols. We write $a \sim a'$ if $a-a' \in S^{-\infty}(\Og \times \rN^N)$.

\medskip






If (\ref{E:sym-in}) is valid for $|\xi| \geq 1$, we say that {\bf $a \in S^m(\Og \times \rN^N)$ for large $|\xi|$}. We also define $a \sim a'$ for large $|\xi|$, in the same way as above \footnote{We will occasionally drop the term ``for large $|\xi|$" when it is clear from the context and not essential for the argument.}.

\subsubsection{\bf Pseudo-differential operators.} \label{S:pdo} 
Here is the definition of a pseudo-differential operator (see, e.g., \cite{Ho1-old,TrPseu,Duistermaat}):
\begin{defi}
Let $a(x,y,\xi) \in S^m((\rN^2 \times \rN^2) \times \rN^2)$. The operator $\mT: \mE'(\rN^2) \to \mD'(\rN^2)$ defined by $$\mT f(x)= \frac{1}{(2 \pi)^2} \intl_{\rN^2} \intl_{\rN^2} e^{i (x-y) \, \xi} a(x,y,\xi) f(y) dy \, d\xi$$
is called a pseudo-differential operator of order $m$ with the amplitude $a(x,y,\xi)$.
\end{defi}
 The oscillatory integral on the right hand side might not converge in the normal sense, even when $f \in C^\infty_0(\rN^2)$. The reader is referred to \cite{hormander71fourier,Ho1-old,TrPseu,Duistermaat} for its rigorous definition.

\medskip

If $\mu(x,y)$ is the Schwartz kernel of the above operator $\mT$, we write $\mu \in I^m(\Delta)$. Here, $ \Delta \subset (\mathbb{T}^*\rN^{2} \setminus 0) \times (\mathbb{T}^*\rN^{2} \setminus 0)$ is the diagonal relation $$\Delta = \{(x,\xi; x, \xi): (x,\xi) \in \cT^*\rN^{2} \setminus 0\}.$$
That is, $\mu \in I^m(\Delta)$ if
\begin{equation} \label{E:mu-pdo} \mu(x,y) = \frac{1}{(2 \pi)^2} \intl_{\rN^2} e^{i (x-y) \xi} \, a(x,y,\xi)  d\xi,\end{equation} where $a \in S^m((\rN^2 \times \rN^2) \times \rN^2)$.
The function $a(x,y,\xi)$ is also called the amplitude of $\mu$. 

\medskip

If $\mu \in I^m(\Delta)$, then (e.g., \cite{hormander71fourier}):
\begin{equation} \label{E:wave'} WF(\mu)' \subset \Delta.\end{equation}

\medskip

Consequently, due to Theorem~\ref{T:wave-Ho}, for any pseudo-differential operator $\mT$ \cite{hormander71fourier}:
\begin{equation} \label{E:inclu-pseu} WF(\mT f) \subset WF(f).\end{equation}
That is, a pseudo-differential operator does not generate new singularities. In several references, i.e. \cite{TrPseu,Petersen-Book}, the above inclusion is directly proved without explicitly employing Theorem~\ref{T:wave-Ho}.
\medskip

If $a(x,y,\xi)$ has the form $a(x,y,\xi)= \sg(x,\xi)$, then $a$ is called the {\bf symbol} of $\mT$ and $\mu$. The reader is referred to, e.g., \cite{hormander71fourier,TrPseu} for the definition and formula of the symbol of a pseudo-differential operator in the general case. However, in this article, we only need to know the symbol in the aforementioned special case.

\medskip

\begin{lemma} \label{L:PDO} Let $\mu$ be defined by 
\begin{equation*}\mu(x,y) = \frac{1}{(2 \pi)^2} \intl_{\rN^2} e^{i (x-y) \xi} \, a(x,\xi)  d\xi,\end{equation*}
where $a(x,\xi) \in S^{m}(\rN^2 \times \rN^2)$ for large $|\xi|$ such that $\partial_x^\ag \, a(x,\xi)$ is locally integrable with respect to $\xi$ for any orders $\ag$ and $x \in \rN^2$. Then, $\mu \in I^m(\Delta)$ with the symbol $\sg$ satisfying $\sg \sim a$ for large $|\xi|$. 
\end{lemma}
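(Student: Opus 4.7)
The strategy is to split $a$ into a part that is a genuine symbol and a low-frequency remainder that produces a smooth kernel. Choose $\chi \in C_0^\infty(\rN^2)$ equal to $1$ in a neighborhood of the origin and vanishing outside the region where $a$ is guaranteed to satisfy the symbol estimates. Write $a = \chi\, a + (1-\chi)\, a =: a_1 + a_2$, and correspondingly $\mu = \mu_1 + \mu_2$, where $\mu_j$ is defined by the same oscillatory integral as $\mu$ but with $a_j$ in place of $a$.

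The high-frequency piece $a_2$ is smooth everywhere (the cutoff $1-\chi$ kills the possibly bad region near $\xi=0$) and inherits the symbol estimates of $a$ on its support, hence $a_2 \in S^m(\rN^2 \times \rN^2)$ globally. By the definition in~(\ref{E:mu-pdo}), $\mu_2 \in I^m(\Delta)$ with symbol $a_2$, and since $a_2 \equiv a$ wherever $\chi$ vanishes, $a_2 \sim a$ for large $|\xi|$.

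For the low-frequency part, I would show $\mu_1 \in C^\infty(\rN^2 \times \rN^2)$ by differentiating under the integral sign: the $\xi$-integration takes place over $\supp \chi$, which is compact; each $x$- or $y$-derivative of the exponential $e^{i(x-y)\xi}$ produces a bounded power of $\xi$; and each $x$-derivative of $a_1$ produces $\chi\, \partial_x^\alpha a$, which by hypothesis is locally integrable in $\xi$. Dominated convergence then yields smoothness of $\mu_1$. A $C^\infty$ kernel corresponds to a smoothing operator, hence $\mu_1 \in I^{-\infty}(\Delta)$ under the usual identification in microlocal analysis, and therefore $\mu = \mu_1 + \mu_2 \in I^m(\Delta)$ with total symbol $\sg \equiv a_2 \pmod{S^{-\infty}}$. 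Combined with the previous step, this gives $\sg \sim a$ for large $|\xi|$ as claimed.

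The subtle point that needs attention is the uniformity in $x$ required to justify dominated convergence: one really needs $\sup_{x \in K}|\partial_x^\alpha a(x,\xi)|$ to be locally integrable in $\xi$ on each compact $K \subset \rN^2$, which is the practical content of the pointwise local integrability hypothesis once combined with the smoothness of $a$ on $|\xi| \geq 1$. A secondary, purely conventional point is the identification between smoothing operators and elements of $I^{-\infty}(\Delta)$, which is standard and implicit in the paper's setup.
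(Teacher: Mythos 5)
Your proof is correct and follows essentially the same route as the paper: both split $\mu$ by cutting the amplitude into a compactly supported low-frequency piece (which gives a $C^\infty$ kernel, hence a contribution in $S^{-\infty}$) and a high-frequency piece that is a genuine symbol of order $m$ agreeing with $a$ for large $|\xi|$. The paper uses a radial cutoff $c(|\xi|)$ with $1-c$ compactly supported where you use $\chi$; this is the same decomposition, and the paper likewise invokes the standard identification of smooth kernels with smoothing operators (via a citation to Tr\`eves) rather than arguing from dominated convergence, but the content is identical. The uniformity caveat you raise at the end is a fair observation; the paper's proof leaves it equally implicit, and in the intended applications the amplitude has an explicit form for which it is trivially satisfied.
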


\begin{proof}
We can write $\mu =  \mu_0 + \mu_1$, where
$$\mu_0(x,y) = \frac{1}{(2 \pi)^2} \intl_{\rN^2} e^{i (x-y) \xi} c(|\xi|) \,a(x,\xi)  d\xi,$$
and
$$\mu_1(x,y) = \frac{1}{(2 \pi)^2} \intl_{\rN^2} e^{i (x-y) \xi} \big[1-c(|\xi|) \big] \, a(x,\xi)  d\xi.$$
Here, $c \in C^\infty(\rN)$ is such that $c(\tau) =0$ for $|\tau| \leq 1$ and $c(\tau)=1$ for $|\tau| \geq 2$.  

\medskip

Since $\partial_x^\ag \, a(x,\xi)$ is locally integrable with respect to $\xi$ for all $x \in \rN^2$, we obtain $\mu_1 \in C^\infty(\rN^2 \times \rN^2)$. Therefore, see e.g. \cite[Proposition 2.1]{TrPseu}, $\mu_1 \in I^{m}(\Delta)$ with the symbol  $\sg^1 \in S^{-\infty}(\rN^2 \times \rN^2)$. 

\medskip

On the other hand, $\mu_0 \in I^m(\Delta)$ with the symbol 

$$\sg^0(x,\xi) = c(|\xi|) \,a(x,\xi) \sim \, a(x,\xi), \quad \mbox{ for lage } |\xi|.$$

\medskip

Therefore, $\mu \in I^m(\Delta)$ with the symbol $\sg$ satisfying $\sg=\sg^0 + \sg^1 \sim a$ for large $|\xi|$.
\end{proof}
\medskip

Let $a(x,\xi) \in S^m(\rN^2 \times \rN^2)$. We say that $a(x,\xi)$ is elliptic near $(x^*,\xi^*)$ if there is a conic neighborhood $V$ of $(x^*,\xi^*)$ and positive numbers $C,\rho$ such that
$$|a(x,\xi)| \geq C(1+|\xi|)^m, \quad \mbox{ for all } (x,\xi) \in V, \mbox{ satisfying } |\xi| \geq \rho.$$
We also say that a pseudo-differential operator (of order $m$) $\mT$ is elliptic near $(x^*,\xi^*)$ if its symbol is elliptic near $(x^*,\xi^*)$.

\medskip

The following result tells us the effect of an elliptic operator on the singularity at $(x^*,\xi^*)$ (see, e.g., \cite{TrPseu,Petersen-Book}):
\begin{theorem} \label{T:Pet}
Let $\mT: \mE'(\rN^2) \to \mD'(\rN^2)$ be a pseudo-differential operator of order $m$. Assume that $\mT$ is elliptic near $(x^*,\xi^*)$. Then, for any $f \in \mE'(\rN^2)$ and $s \in \rN$, \begin{equation*} (x^*,\xi^*) \in WF_s(f) \mbox{ if and only if } (x^*,\xi^*) \in WF_{s-m}(\mT f).\end{equation*}
\end{theorem}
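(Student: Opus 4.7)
The plan is to prove the two implications separately. The ``if'' direction $(x^*,\xi^*) \in WF_{s-m}(\mT f) \Rightarrow (x^*,\xi^*) \in WF_s(f)$ does not use ellipticity at all: it is the standard microlocal Sobolev regularity of pseudo-differential operators, namely that any PDO of order $m$ satisfies $WF_{s-m}(\mT f) \subset WF_s(f)$ for all $f \in \mE'(\rN^2)$ and all $s \in \rN$. I would prove this by the usual localize-and-estimate argument: pick $\varphi \in C^\infty_0(\rN^2)$ with $\varphi(x^*) \neq 0$ and a conic neighborhood $V$ of $\xi^*$ on which $(1+|\xi|^2)^{s/2}\mF(\varphi f)(\xi) \in L^2$, decompose $\mT(\varphi f)$ using the symbolic calculus (commutator with $\varphi$ is of order $m-1$, and $\varphi \mT f$ differs from $\mT(\varphi f)$ by a term smooth at $x^*$), and conclude that $(1+|\xi|^2)^{(s-m)/2}\mF(\varphi_1 \mT f)(\xi) \in L^2$ on a slightly smaller cone for a suitable $\varphi_1$. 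The contrapositive is the desired implication.

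For the ``only if'' direction I would invoke ellipticity to build a microlocal parametrix. Let $a(x,\xi)$ be the symbol of $\mT$. On a conic neighborhood $V$ of $(x^*,\xi^*)$ on which $|a(x,\xi)| \geq C(1+|\xi|)^m$, set $q_0(x,\xi) = \chi(x,\xi)/a(x,\xi)$, where $\chi \in S^0$ is a conic cutoff equal to $1$ near $(x^*,\xi^*)$ and supported in $V$. Then $q_0 \in S^{-m}$, and the symbolic composition formula gives $q_0 \# a \sim \chi + r_1$ with $r_1 \in S^{-1}$ supported in $V$. I would iterate in the usual way to obtain $q \sim q_0 - q_0\# r_1 + q_0 \# r_1 \# r_1 - \cdots \in S^{-m}$ (Borel summation of the asymptotic series) so that, if $\mQ = \mathrm{Op}(q)$, then $\mQ \mT = \mathrm{Op}(\chi) + \mR$, where $\mR$ has amplitude in $S^{-\infty}$ near $(x^*,\xi^*)$ and hence $\mR f$ is microlocally smooth at $(x^*,\xi^*)$. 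Assuming $(x^*,\xi^*) \notin WF_{s-m}(\mT f)$, applying the ``if'' direction to $\mQ$ (of order $-m$) yields $(x^*,\xi^*) \notin WF_{s}(\mQ \mT f)$. Writing $f = \mathrm{Op}(\chi) f + (I - \mathrm{Op}(\chi)) f = \mQ \mT f - \mR f + (I - \mathrm{Op}(\chi)) f$ and noting that each of the last three terms is in $H^s$ microlocally at $(x^*,\xi^*)$ (the last because $\chi \equiv 1$ there, so $I - \mathrm{Op}(\chi)$ is microlocally smoothing at $(x^*,\xi^*)$), we conclude $(x^*,\xi^*) \notin WF_s(f)$.

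The main obstacle is the parametrix construction, because the standard composition formula and the Borel summation argument are used in a global form while our ellipticity is only local in phase space. Handling the conic cutoff $\chi$ cleanly — making sure the residual terms $r_j$ at every step remain supported in $V$ and that the asymptotic series for $q$ can be summed inside that cone — is the only place where care is required; the rest is routine application of the calculus. Rather than reproducing these standard details, in the write-up I would simply refer the reader to the elliptic parametrix construction in \cite{Ho1-old,TrPseu,Petersen-Book} and then apply it together with the easy direction exactly as sketched above.
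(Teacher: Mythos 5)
The paper does not provide a proof of Theorem~\ref{T:Pet}; it is quoted as a standard fact with a citation to \cite{TrPseu,Petersen-Book}. Your two-step argument --- the non-elliptic direction via the microlocal $H^s\to H^{s-m}$ continuity of a pseudo-differential operator of order $m$, and the elliptic direction via construction of a microlocal parametrix $\mQ$ of order $-m$ with $\mQ\mT=\mathrm{Op}(\chi)$ modulo a term whose amplitude is in $S^{-\infty}$ near $(x^*,\xi^*)$, followed by the decomposition $f=\mQ\mT f-\mR f+(I-\mathrm{Op}(\chi))f$ --- is precisely the standard proof found in those references, and it is correct in substance. The support and ordering issues you flag in the Neumann-series construction of $q$ (keeping each iterate supported in the cone where $|a|\geq C(1+|\xi|)^m$, and putting $r_1$ on the correct side for a left parametrix) are genuine but routine bookkeeping points that the cited texts carry out, so deferring to them is entirely appropriate.
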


\medskip

The following technical term will be used in the statement of Theorem~\ref{T:Main1}~a):
\begin{defi} \label{D:PDO}
Let $A \subset \Delta$ be a conic set that is open in the topology of $\Delta$, induced from $(\cT^* \rN^2 \setminus 0) \times (\cT^* \rN^2 \setminus 0)$. We say that {\bf near $A$, $\mu$ is microlocally in the space $I^m(\Delta)$ with the symbol $\sg$} if the following holds: for each element $(x^*,\xi^*;x^*,\xi^*) \in A$ there exists $\mu_* \in I^m(\Delta)$ such that $$(x^*,\xi^*; x^*, \xi^*) \not \in WF(\mu-\mu_*)',$$ and the symbol of $\mu_*$ is equal to $\sg(x,\xi)$ in a conic neighborhood of $(x^*,\xi^*)$. 
\end{defi}

\medskip
In Section~\ref{S:S}, we will need the following more ``microlocalized" version of Theorem~\ref{T:Pet}: 
\begin{coro} \label{C:Pet}
Let $\mT: \mE'(\rN^2) \to \mD'(\rN^2)$ be a linear operator whose Schwartz kernel $\mu \in \mD'(\rN^2 \times \rN^2)$ satisfies $WF(\mu)' \subset (\cT^*\rN^2\setminus 0)\times (\cT^*\rN^2\setminus 0)$ and near $A \subset \Delta$ \footnote{Here, $A$ satisfies the condition in Definition~\ref{D:PDO}.}, $\mu$ is microlocally in $I^m(\Delta)$ with the symbol $\sg(x,\xi)$. Assume that $(x^*,\xi^*;x^*,\xi^*) \in A$, $\sg(x,\xi)$ is elliptic of order $m$ near $(x^*,\xi^*)$, and \begin{equation} \label{E:wave-x*} \{(x^*,\xi^*;y,\eta) \in WF(\mu)': (y,\eta) \in WF(f)\} \subset \Delta.\end{equation} Then, for any $s \in \rN$, \begin{equation*} (x^*,\xi^*) \in WF_s(f) \mbox{ if and only if } (x^*,\xi^*) \in WF_{s-m}(\mT f).\end{equation*}
\end{coro}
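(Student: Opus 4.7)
The plan is to decompose $\mu$ at the diagonal point $(x^*,\xi^*;x^*,\xi^*)$ into a genuine pseudo-differential piece plus a microlocally harmless error. By Definition~\ref{D:PDO}, there exists $\mu_* \in I^m(\Delta)$ whose symbol agrees with $\sg(x,\xi)$ in a conic neighborhood of $(x^*,\xi^*)$ and such that $(x^*,\xi^*;x^*,\xi^*) \notin WF(\mu-\mu_*)'$. Set $\mu_e := \mu - \mu_*$ and let $\mT_*$, $\mT_e$ be the operators with Schwartz kernels $\mu_*$ and $\mu_e$, so that $\mT = \mT_* + \mT_e$. Since $\mu_* \in I^m(\Delta)$ we have $WF(\mu_*)' \subset \Delta$ by \eqref{E:wave'}, and combined with the hypothesis $WF(\mu)' \subset (\cT^*\rN^2\setminus 0)\times(\cT^*\rN^2\setminus 0)$ this yields the same inclusion for $\mu_e$.

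The operator $\mT_*$ is a pseudo-differential operator of order $m$, and its symbol coincides with $\sg$—hence is elliptic of order $m$—near $(x^*,\xi^*)$, so Theorem~\ref{T:Pet} applies and gives
\[ (x^*,\xi^*) \in WF_s(f) \Longleftrightarrow (x^*,\xi^*) \in WF_{s-m}(\mT_* f). \]
It therefore suffices to prove that $\mT_e f$ is microlocally smooth at $(x^*,\xi^*)$; this combined with the display above and $\mT f = \mT_* f + \mT_e f$ yields the stated equivalence.

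To show $(x^*,\xi^*) \notin WF(\mT_e f)$, I would invoke Theorem~\ref{T:wave-Ho} to get $WF(\mT_e f) \subset WF(\mu_e)' \circ WF(f)$ and then check that the composition is empty over $(x^*,\xi^*)$. Suppose for contradiction that some $(y,\eta) \in WF(f)$ satisfies $(x^*,\xi^*;y,\eta) \in WF(\mu_e)'$. Since $WF(\mu_e)' \subset WF(\mu)' \cup WF(\mu_*)'$, either $(x^*,\xi^*;y,\eta) \in WF(\mu)'$, in which case hypothesis \eqref{E:wave-x*} forces $(y,\eta) = (x^*,\xi^*)$, or $(x^*,\xi^*;y,\eta) \in WF(\mu_*)' \subset \Delta$, which again forces $(y,\eta) = (x^*,\xi^*)$. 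Either way $(x^*,\xi^*;x^*,\xi^*) \in WF(\mu_e)'$, contradicting the defining property of $\mu_*$. The main—and essentially only—obstacle is this bookkeeping step: the extra assumption \eqref{E:wave-x*} is precisely what is needed to exclude off-diagonal contributions to $WF(\mu_e)' \circ WF(f)$ at $(x^*,\xi^*)$, while the diagonal contribution is absorbed into $\mu_*$ by construction.
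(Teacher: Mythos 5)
Your argument is essentially the same as the paper's: decompose $\mu = \mu_* + \mu_e$ via Definition~\ref{D:PDO}, apply Theorem~\ref{T:Pet} to $\mT_*$, and use Theorem~\ref{T:wave-Ho} together with $WF(\mu_e)' \subset WF(\mu)' \cup \Delta$ and hypothesis \eqref{E:wave-x*} to show $(x^*,\xi^*) \notin WF(\mT_e f)$ by contradiction. The bookkeeping is correct and matches the paper's proof step for step.
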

We provide its proof here to illuminate the use of the above assumptions.
\begin{proof} Since $(x^*,\xi^*;x^*,\xi^*) \in A$ and $\mu$ is microlocally in $I^m(\Delta)$ near $A$ with symbol $\sg$, there is $\mu_* \in I^m(\Delta)$ such that \begin{equation} \label{E:xx} (x^*,\xi^*; x^*,\xi^*) \not \in WF(\mu-\mu_*).\end{equation}
Moreover, the symbol of $\mu_*$ is $\sg(x,\xi)$ near $(x^*,\xi^*)$. Let $\mT_*$ be the pseudo-differential operator of order $m$ whose Schwartz kernel is $\mu_*$. Then, $\mT_*$ is elliptic near $(x^*,\xi^*)$. Applying Theorem~\ref{T:Pet}, we obtain, for any $s \in \rN$, \begin{equation*} (x^*,\xi^*) \in WF_s(f) \mbox{ if and only if } (x^*,\xi^*) \in WF_{s-m}(\mT_* f).\end{equation*}
Therefore, it suffices to prove that \begin{equation} \label{E:contra} (x^*, \xi^*) \not \in WF(\mT f - \mT_* f).\end{equation}

\medskip

Let us proceed to prove (\ref{E:contra}). We observe that
\begin{equation} \label{E:wave-mumu} WF(\mu - \mu_*)' \subset WF(\mu)' \cup WF(\mu_*)' \subset WF(\mu)' \cup \Delta.\end{equation}
Here, to obtain the second inclusion, we have used (\ref{E:wave'}) for $WF(\mu_*)'$.

\medskip

The inclusion (\ref{E:wave-mumu}), together with $WF(\mu)' \subset (\cT^*\rN^2 \setminus 0) \times (\cT^* \rN^2\setminus 0)$, implies
$$WF(\mu-\mu_*)'  \subset (\cT^*\rN^2 \setminus 0) \times (\cT^* \rN^2\setminus 0). $$
Hence, due Theorem~\ref{T:wave-Ho},
\begin{equation}\label{E:mu-mu'}
WF(\mT f -\mT_* f) \subset WF(\mu-\mu_*)' \circ WF(f).
\end{equation}

\medskip

We now prove (\ref{E:contra}) by contradiction. To that end, let us assume $$(x^*,\xi^*) \in WF(\mT f - \mT^* f).$$ Then, from (\ref{E:mu-mu'}), there is $(y,\eta) \in WF(f)$ such that \begin{equation} \label{E:xy} (x^*,\xi^*;y,\eta) \in WF(\mu-\mu_*)'.\end{equation} From (\ref{E:wave-mumu}), we obtain 
$$(x^*,\xi^*;y,\eta) \in WF(\mu)'  \cup \Delta.$$
From (\ref{E:wave-x*}), we arrive to
$$(x^*,\xi^*; y,\eta) \subset \Delta.$$
That is $(x^*,\xi^*; y,\eta) = (x^*,\xi^*; x^*,\xi^*)$. We, hence, arrive to a contradiction between (\ref{E:xy}) and (\ref{E:xx}). This finishes our proof.
\end{proof}

\subsubsection{\bf Fourier integral operators (FIOs).} \label{S:FIO} In this section, we do not attempt to give the general definition and properties of FIOs. We, instead, introduce a special type of FIOs that is needed in this article. The interested reader is referred to \cite{hormander71fourier,Ho1-old,TrFour,Duistermaat} for a comprehensive presentation on FIOs.

\medskip

Let $\ve \in \rN^2$ be a unit vector and $a(x,y,\tau) \in S^{m+ \frac{1}{2}}((\rN^2 \times \rN^2) \times \rN)$. Then, the operator $\mT: \mE'(\rN^2) \to \mD'(\rN^2)$ defined by
\begin{equation} \label{E:FIO} \mT(f)(x) =\frac{1}{(2 \pi)^{3/2}} \intl_{\rN^2} \intl_{\rN} e^{i (x-y) \cdot \ve \, \tau} a(x,y,\tau) f(y) d\tau \, dy \end{equation}
is a Fourier integral operator of order $m$ with the phase function $\phi(x,y,\tau)= (x-y) \cdot \ve \, \tau$ and amplitude function $a(x,y,\tau)$. The integral on the right hand side of (\ref{E:FIO}) may not converge in the normal sense, even if $f \in C^\infty_0(\rN^2)$. The reader is referred to, e.g., \cite{hormander71fourier,TrFour} for its rigorous definition.

\begin{rem} \label{R:Rule} One may notice the difference between the order of $\mT$ and that of the amplitude $a(x,y,\tau)$. This comes from the following general rule (see, e.g., \cite{hormander71fourier,TrFour})
\begin{equation} \mbox{ order of } \mT = \mbox{ order of } a +(N-n)/2.\end{equation}
Here, $n=n_x=n_y$ is the dimension of $x$ and $y$, and $N$ is the dimension of $\tau$. In our case, $n=2$ and $N=1$.
\end{rem}

\medskip

Let $\mC \subset (\cT^* \rN^2 \setminus 0) \times (\cT^* \rN^2 \setminus 0)$ be defined by
\begin{eqnarray*}
\mC = \{(x,\ga\, \ve; x + t \ve^\perp, \ga\, \ve): x \in \rN^2,~\ga,t \in \rN,~\ga \neq0\}.
\end{eqnarray*}
Then, $\mC$ is called the canonical relation of $\mT$ \footnote{The reader is referred to \cite{hormander71fourier} for the canonical relation of a general FIO.}. The following result tells us how $\mT$ transforms the wavefront set of a distribution $f \in \mE'(\rN^2)$ (see, e.g., \cite{hormander71fourier,TrFour}):
\begin{equation} \label{E:incl} WF(\mT f) \subset \mC \circ WF(f).\end{equation}

\medskip

Let  $\mu$ be the Schwartz kernel of $\mT$. That is, 
\begin{equation} \label{E:kFIO} \mu(x,y) =\frac{1}{(2 \pi)^{3/2}} \intl_{\rN} e^{i (x-y) \cdot \ve \, \tau} a(x,y,\tau) d\tau.\end{equation} Then, we write $\mu \in I^m(\mC)$ (the interested reader is referred to \cite{hormander71fourier} for the definition of the H\"omander space $I^m(\mC)$ where $\mC$ is a general Lagrangian). 

\medskip

For any $\mu \in I^m(\mC)$, one has \footnote{The inclusion (\ref{E:mu-FIO}), in fact, implies (\ref{E:incl}), due to Theorem~\ref{T:wave-Ho}.}
\begin{equation} \label{E:mu-FIO} WF(\mu) ' \subset \mC.\end{equation}
Here, again, $WF(\mu)'$ is the twisted wave front set of $\mu$, defined by 
 $$WF(\mu)'=\{(x,\xi; y,- \eta):  (x,\xi;y,\eta) \in WF(\mu)\}.$$

\medskip

Assume that $a_{m+\frac{1}{2}}(x,y,\tau) \in C^\infty((\rN^2 \times \rN^2) \times (\rN \setminus 0))$ is homogeneous of degree $m+\frac{1}{2}$ \footnote{That is, for all $s>0$, $a_m(x,y,s \, \tau)= s^{m+\frac{1}{2}} a_m(x,y, \tau)$.} and not identically zero on the projection $\mC_{x,y}$ of $\mC$ on the $(x,y)$-space such that
$$a - a_{m+\frac{1}{2}} \in S^{m-\frac{1}{2}}((\rN^2 \times \rN^2) \times \rN), \mbox{ for large } |\tau|.$$
Then, we say that $\sg(x,y,\tau) = a_{m+\frac{1}{2}}(x,y,\tau)|_{(x,y) \in \mC_{x,y}}$ is the {\bf principal symbol} of $\mu$ associated with the phase function $\phi(x,y,\tau) = (x-y) \cdot \ve \, \tau$. The rigorous definition of the principal symbol of a general FIO is quite complicated and abstract. The interested reader is referred \cite{hormander71fourier} for the matter.

\begin{lemma} \label{L:FIO} Let $\mu$ be defined as in (\ref{E:kFIO}), where $a(x,y,\tau) \in S^{m+\frac{1}{2}}((\rN^2 \times \rN^2) \times \rN)$ {\bf for large} $|\tau|$ such that $\partial_x^\ag \, \partial_y^\bg a(x,y,\tau)$ is locally integrable with respect to $\tau$ for all $(x,y) \in \rN^2 \times \rN^2$. Then, $\mu \in I^m(\mC)$ with the amplitude $a'(x,y,\tau)\sim a(x,y,\tau)$ for large $|\tau|$.
\end{lemma}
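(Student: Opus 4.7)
The plan is to mirror the proof of Lemma~\ref{L:PDO}, replacing the phase $(x-y)\cdot\xi$ and integration over $\rN^2$ by the FIO phase $(x-y)\cdot\ve\,\tau$ and integration over $\rN$. Let $c\in C^\infty(\rN)$ satisfy $c(\tau)=0$ for $|\tau|\le 1$ and $c(\tau)=1$ for $|\tau|\ge 2$. I would split $\mu=\mu_0+\mu_1$, where
\begin{equation*}
\mu_0(x,y)=\frac{1}{(2\pi)^{3/2}}\intl_{\rN} e^{i(x-y)\cdot\ve\,\tau}\,c(|\tau|)\,a(x,y,\tau)\,d\tau,
\end{equation*}
and $\mu_1$ is defined analogously using the cutoff $1-c(|\tau|)$.

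For $\mu_1$, the integrand is compactly supported in $\tau$ (in the set $|\tau|\le 2$). Differentiating under the integral brings down polynomial factors in $\tau$ from the exponential together with derivatives of $(1-c(|\tau|))a(x,y,\tau)$. Since polynomials in $\tau$ are bounded on $|\tau|\le 2$ and, by hypothesis, $\partial_x^\ag\partial_y^\bg a(x,y,\tau)$ is locally integrable in $\tau$, the Leibniz rule plus dominated convergence justifies differentiation of $\mu_1$ to all orders in $(x,y)$. Hence $\mu_1\in C^\infty(\rN^2\times\rN^2)$; since a smooth kernel has empty wave front set (which is trivially contained in $\mC$), it lies in $I^{-\infty}(\mC)\subset I^m(\mC)$ and can be represented by an oscillatory integral of the form (\ref{E:kFIO}) with amplitude in $S^{-\infty}((\rN^2\times\rN^2)\times\rN)$.

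For $\mu_0$, set $\tilde a(x,y,\tau):=c(|\tau|)a(x,y,\tau)$. For $|\tau|\ge 2$ we have $\tilde a=a$, which satisfies the symbol estimates (\ref{E:sym-in}) of order $m+\tfrac12$ by assumption, while for $|\tau|\le 2$ the function $\tilde a$ and all its derivatives are smooth on the interior and vanish near $|\tau|=0$, so they are trivially bounded on compact sets in $(x,y)$. Thus $\tilde a\in S^{m+\frac12}((\rN^2\times\rN^2)\times\rN)$ globally, which places $\mu_0\in I^m(\mC)$ with amplitude $\tilde a$, and obviously $\tilde a\sim a$ for large $|\tau|$. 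Adding the two pieces gives $\mu\in I^m(\mC)$ with amplitude $a':=\tilde a+(\text{the $S^{-\infty}$ amplitude of }\mu_1)\sim a$ for large $|\tau|$.

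The main obstacle is the bookkeeping in the step that a smooth kernel is in $I^{-\infty}(\mC)$ with the desired oscillatory-integral representation; this is a standard fact for non-degenerate phase functions such as $(x-y)\cdot\ve\,\tau$, but it is what makes the clean conclusion $a'\sim a$ possible, exactly as in the pseudo-differential case handled in the proof of Lemma~\ref{L:PDO}. Otherwise the argument is a direct transcription of that earlier proof with the change of phase and of integration variable dimension dictated by Remark~\ref{R:Rule}.
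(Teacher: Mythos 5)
Your proposal is correct and is exactly the adaptation the paper has in mind: the paper omits the proof of Lemma~\ref{L:FIO}, stating only that it ``is similar to that of Lemma~\ref{L:PDO}.'' Your splitting $\mu=\mu_0+\mu_1$ via the cutoff $c(|\tau|)$, the smoothness of $\mu_1$ from local integrability of the $(x,y)$-derivatives of $a$ on $|\tau|\le 2$, and the global membership of $c(|\tau|)a$ in $S^{m+\frac12}$ are precisely the FIO transcription of that earlier argument.
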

The proof of Lemma~\ref{L:FIO} is similar to that of Lemma~\ref{L:PDO}. We skip it for the sake of brevity.

\medskip

The following result is helpful to analyze the strength of the artifacts in terms of their Sobolev order:
\begin{theorem} \label{T:Ho}
Let $\mT$ be defined in (\ref{E:FIO}) and $f \in \mE'(\rN^2)$. Assume that $(x^*,\xi^*=\ga_* \ve)  \in WF_{s}(\mT f)$. Then, $(y^*=x^*+t_0 \ve^\perp,\xi^*) \in WF_{s+m+\frac{1}{2}}(f)$ for some $t_0 \in \rN$. 
\end{theorem}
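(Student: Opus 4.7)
The plan is to prove the contrapositive: assume that $(x^* + t\ve^\perp,\xi^*) \notin WF_{s+m+\frac{1}{2}}(f)$ for \emph{every} $t \in \rN$, and deduce $(x^*,\xi^*) \notin WF_s(\mT f)$. Since $f$ has compact support, only values of $t$ in some bounded set $K \subset \rN$ are relevant. For each such $t$, the hypothesis yields a cutoff $\varphi_t \in C_0^\infty(\rN^2)$ with $\varphi_t(x^*+t\ve^\perp) \neq 0$ and a conic neighborhood $V_t$ of $\xi^*$ giving microlocal $H^{s+m+\frac{1}{2}}$ control. By compactness and a partition-of-unity argument, I build a single cutoff $\psi \in C_0^\infty(\rN^2)$ that equals $1$ on a neighborhood of $\{x^*+t\ve^\perp : t\in K\}\cap\supp(f)$ and such that $\psi f$ is microlocally in $H^{s+m+\frac{1}{2}}$ in one common conic neighborhood $V$ of $\xi^*$.

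Writing $f = \psi f + (1-\psi)f$, I first dispose of $(1-\psi)f$. Its support is disjoint from the line $\{x^*+t\ve^\perp : t \in \rN\}$, and the canonical relation $\mC$ only connects $(x^*,\xi^*)$ to covectors sitting over that line; consequently $\mC \circ WF((1-\psi)f)$ avoids $(x^*,\xi^*)$, and \eqref{E:incl} (combined with $WF(\mu)'\subset\mC$ and Theorem~\ref{T:wave-Ho}) gives $(x^*,\xi^*) \notin WF(\mT((1-\psi)f))$. Next, using a conic Fourier cutoff, I decompose $\psi f = g_1 + g_2$ so that $\hat{g}_1$ is supported in a slightly smaller conic neighborhood $V' \Subset V$ of $\pm\xi^*$ and $\hat{g}_2$ is supported outside $V'$. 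Because $\psi f$ is microlocally in $H^{s+m+\frac{1}{2}}$ throughout $V$, the piece $g_1$ lies in $H^{s+m+\frac{1}{2}}(\rN^2)$ globally; and because $\mC$ only transports $\xi^*$ to covectors parallel to $\ve$, the Fourier localization of $g_2$ away from $V'$ together with \eqref{E:incl} places $\mT g_2$ in $C^\infty$ near $(x^*,\xi^*)$.

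It remains to bound $\mT g_1$ microlocally at $(x^*,\xi^*)$, and this is the main step. Rotating coordinates so that $\ve = e_1$, the kernel reads
\begin{equation*}
\mu(x,y) = \frac{1}{(2\pi)^{3/2}} \int_{\rN} e^{i(x_1-y_1)\tau} a(x,y,\tau)\,d\tau,
\end{equation*}
so that
\begin{equation*}
\mT g_1(x_1,x_2) = \int_{\rN} \bigl(\mP_{x_2,y_2} g_1(\cdot,y_2)\bigr)(x_1)\,dy_2,
\end{equation*}
where $\mP_{x_2,y_2}$ is a one-dimensional pseudo-differential operator in $x_1$ of order $m+\frac{1}{2}$ with smooth parameter dependence on $(x_2,y_2)$. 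The continuity $\mP_{x_2,y_2}: H^{s+m+\frac{1}{2}}(\rN_{y_1}) \to H^s(\rN_{x_1})$, combined with Minkowski's inequality on the compact $y_2$-range of $\supp(g_1)$ and Plancherel in $y_2$, yields $\|\varphi\,\mT g_1\|_{H^s(\rN^2)} \leq C\,\|g_1\|_{H^{s+m+\frac{1}{2}}(\rN^2)}$ for a cutoff $\varphi \in C_0^\infty$ near $x^*$. Since $\widehat{\mT g_1}$ is effectively concentrated in directions near $\ve$, this $H^s$ control upgrades to microlocal $H^s$ regularity at $(x^*,\xi^*)$, completing the proof.

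The main obstacle is the half-order shift: the canonical relation $\mC$ is \emph{not} a local canonical graph (its left projection has one-dimensional fibers parametrized by $t$), which is exactly why Remark~\ref{R:Rule} turns an order-$(m+\frac{1}{2})$ amplitude into an order-$m$ FIO yet the Sobolev loss remains $m+\frac{1}{2}$. Carefully tracking this extra transversal integration, and reconciling it with the microlocal bookkeeping in Steps~1--2, is the delicate point.
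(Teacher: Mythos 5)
Your proof is correct in outline and takes a genuinely different route from the paper's. The paper invokes, as a black box, the inclusion \eqref{E:incl} together with H\"ormander's Sobolev continuity theorem for Fourier integral operators \cite[Theorem 4.3.2]{hormander71fourier}, remarking only that the half-order shift appears because $\mC$ is not a local canonical graph. You instead give a hands-on reduction: after a spatial cutoff and a conic Fourier cutoff, the key estimate is reduced to the one-dimensional boundedness $H^{s+m+\frac{1}{2}}(\rN_{y_1})\to H^s(\rN_{x_1})$ of the frozen-$(x_2,y_2)$ pseudo-differential operator, whose amplitude has order $m+\frac{1}{2}$ in $\tau$, followed by integration in the transverse variable $y_2$ via Minkowski and Plancherel. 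This makes the extra $\frac{1}{2}$-order loss emerge naturally as the gap between the amplitude order $m+\frac{1}{2}$ and the FIO order $m$ from Remark~\ref{R:Rule}, and yields a self-contained argument where the paper merely cites H\"ormander. Two technical caveats in your sketch are worth flagging. First, $g_1=\phi(D)(\psi f)$ is not compactly supported (only rapidly decaying away from $\supp(\psi f)$), so "the compact $y_2$-range of $\supp(g_1)$" is not literally available; you should first peel off the Schwartz tail of $g_1$, whose image under $\mT$ is smooth. Second, the slicing argument directly controls the anisotropic norm $\int(1+|\xi_1|^2)^s|\mF(\varphi\mT g_1)|^2\,d\xi$ rather than $\|\varphi\mT g_1\|_{H^s(\rN^2)}$; for the microlocal conclusion at $(x^*,\xi^*)$ one only needs to restrict to a narrow cone around $\xi^*=\gamma_*\ve$, where $|\xi|\sim|\xi\cdot\ve|$, which is what your closing sentence about the Fourier support of $\mT g_1$ accomplishes — but this should be stated as the reason the anisotropic estimate suffices, rather than as an "upgrade" to a global $H^s$ bound.
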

Theorem~\ref{T:Ho} comes from (\ref{E:incl}) and the continuity of the FIOs between Sobolev spaces \cite[Theorem 4.3.2]{hormander71fourier}. The reader should notice the order $(s+m+\frac{1}{2})$ (instead of $s+m$) in the conclusion. This is due to the fact that $\mC$ is not a local canonical graph \footnote{The reader is referred to \cite[Definition 4.1.5]{hormander71fourier} for the definition of a local canonical graph.}, and that order appears when applying \cite[Theorem 4.3.2]{hormander71fourier}.

\medskip

The following technical term will be used in the statement of Theorem~\ref{T:Main1}~b):
\begin{defi} \label{D:FIO} Let $A \subset \mC$ be a conic set that is open in the topology of $\mC$, induced from $(\cT^* \rN^2 \setminus 0) \times (\cT^* \rN^2 \setminus 0)$. We say that {\bf near $A$, $\mu$ is microlocally in the space $I^m(\mC)$ with the principal symbol $\sg_0(x,y,\tau)$} if the following holds: for each element $(x^*,\xi^*;y^*,\eta^*) \in A$ there exists $\mu_* \in I^m(\mC)$ such that $$(x^*,\xi^*; y^*, \eta^*) \not \in WF(\mu-\mu_*)'$$ and the principal symbol of $\mu_*$ is equal to $\sg_0(x,y,\tau)$ for all $(x,y)$ in a neighborhood of $(x^*,y^*)$.  
\end{defi}

In Section \ref{S:Artifacts}, we will in fact  need the following more ``microlocalized" version of Theorem \ref{T:Ho}:

\begin{coro} \label{C:Ho}
Let $\mT: \mE'(\rN^2) \to \mD'(\rN^2)$ be a linear operator whose Schwartz kernel $\mu \in \mD'(\rN^2 \times \rN^2)$ satisfies $WF(\mu) \subset (\cT^*\rN^2\setminus 0)\times (\cT^*\rN^2\setminus 0)$ and near $A \subset \mC$ \footnote{Here, $A$ satisfies the condition in Definition~\ref{D:FIO}.}, $\mu$ is microlocally in $I^m(\mC)$. Assume that $(x^*,\xi^*=\ga_* \ve) \in WF_s(\mT f)$ and $$\{(x^*,\xi^*;y,\eta) \in WF(\mu)' \cup \mC: (y,\eta) \in WF(f)\} \mbox{ is a compact subset of  A}.$$  Then, $(y^*=x^*+t_0 \ve^\perp,\xi^*) \in WF_{s+m+\frac{1}{2}}(f)$ for some $t_0 \in \rN$
\end{coro}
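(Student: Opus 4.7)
The plan is to reduce the statement to the already-proven Theorem~\ref{T:Ho} by approximating $\mT$, microlocally at $(x^*,\xi^*)$, by a genuine FIO $\mT_\ast$ with kernel in $I^m(\mC)$, and then showing that the residual $\mT f - \mT_\ast f$ is smooth at $(x^*,\xi^*)$. The key set to control is
\[
K := \{(x^*,\xi^*;y,\eta) \in WF(\mu)' \cup \mC : (y,\eta) \in WF(f)\},
\]
which is exactly the collection of ``incoming'' directions that can produce the singularity of $\mT f$ at $(x^*,\xi^*)$ via the composition law of Theorem~\ref{T:wave-Ho}. By hypothesis, $K$ is a compact subset of the relatively open set $A\subset \mC$, which is the precondition that enables a patching argument.

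\textbf{Constructing $\mT_\ast$.} First I would cover $K$ by finitely many relatively open neighborhoods $W_1,\dots,W_N\subset A$ and choose corresponding $\mu_j \in I^m(\mC)$ (supplied by Definition~\ref{D:FIO}) with $W_j \cap WF(\mu-\mu_j)' = \emptyset$. Next, I would pick a homogeneous-degree-zero partition of unity $\{\chi_j(y,\eta)\}_{j=1}^N$ in the $(y,\eta)$-variable on the cone $|\eta|\geq 1$, subordinate to the $(y,\eta)$-projections of the $W_j$ and summing to $1$ microlocally on the $(y,\eta)$-image of $K$. Letting $\Psi_j$ be the zero-order pseudodifferential operator with symbol $\chi_j$, set
\[
\mT_\ast := \sum_{j=1}^N \mT_j \, \Psi_j,
\]
where $\mT_j$ is the FIO with kernel $\mu_j$. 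Right-composition of an element of $I^m(\mC)$ with a zero-order pseudodifferential operator remains in $I^m(\mC)$, so the Schwartz kernel $\mu_\ast$ of $\mT_\ast$ lies in $I^m(\mC)$; and, because each $\Psi_j$ is microlocally the identity on $W_j$ while each $\mT_j$ agrees with $\mT$ microlocally there, the construction gives
\[
K \cap WF(\mu - \mu_\ast)' = \emptyset.
\]

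\textbf{Finishing and the main obstacle.} Once $\mu_\ast$ is in place, the conclusion is formal: $WF(\mu-\mu_\ast)' \subset WF(\mu)' \cup \mC \subset (\cT^*\rN^2\setminus 0)\times(\cT^*\rN^2\setminus 0)$, so Theorem~\ref{T:wave-Ho} gives $WF(\mT f - \mT_\ast f) \subset WF(\mu-\mu_\ast)' \circ WF(f)$. If $(x^*,\xi^*)$ belonged to this set, there would be $(y,\eta) \in WF(f)$ with $(x^*,\xi^*;y,\eta)\in WF(\mu-\mu_\ast)'\subset WF(\mu)'\cup\mC$, placing $(x^*,\xi^*;y,\eta)$ into $K$ and contradicting the disjointness above. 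Hence $(x^*,\xi^*)\in WF_s(\mT_\ast f)$, and Theorem~\ref{T:Ho} applied to the honest FIO $\mT_\ast$ produces a $t_0\in\rN$ with $(x^*+t_0\ve^\perp,\xi^*)\in WF_{s+m+\frac{1}{2}}(f)$. The main obstacle is the middle step: the local FIO representatives $\mu_j$ are not comparable on overlaps, so assembling a single global $\mu_\ast \in I^m(\mC)$ that agrees with $\mu$ microlocally on all of $K$ requires the compactness of $K$ inside $A$, homogeneous cone cutoffs in $\eta$, and the closure of $I^m(\mC)$ under right-composition with zero-order pseudodifferential operators.
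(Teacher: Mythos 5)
Your proof correctly exposes a subtlety that the paper's one-line remark glosses over. The paper claims the corollary ``can be proved in the same manner as Corollary~\ref{C:Pet},'' but in that earlier proof a \emph{single} local representative $\mu_*$ suffices only because the incidence set $\{(x^*,\xi^*;y,\eta)\in WF(\mu)':(y,\eta)\in WF(f)\}$ is forced by hypothesis into $\Delta$, hence into the single point $(x^*,\xi^*;x^*,\xi^*)$. Here the analogous set $K$ is only assumed to be a compact subset of $A\subset\mC$, and generically contains a whole family $(x^*,\xi^*;x^*+t\ve^\perp,\xi^*)$; one $\mu_*$ agreeing with $\mu$ near a single such point cannot kill the contributions from the others to $\mT f-\mT_*f$ at $(x^*,\xi^*)$. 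So the patching you introduce --- a finite cover of $K$, local representatives $\mu_j\in I^m(\mC)$, a zero-order partition of unity $\{\chi_j\}$ in $(y,\eta)$, and closure of $I^m(\mC)$ under right composition with zero-order $\Psi$DOs --- is genuinely needed to make ``the same manner'' precise. The overall architecture (build $\mT_*$, show $(x^*,\xi^*)\notin WF(\mT f-\mT_* f)$ via $WF(\mu-\mu_*)'\subset WF(\mu)'\cup\mC$ and the compactness hypothesis, then apply Theorem~\ref{T:Ho} to the honest FIO $\mT_*$) is sound.

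Two points should be tightened. First, making $\{\chi_j\}$ subordinate to the full projections $\pi(W_j)$ is not quite enough: from $\chi_j(y,\eta)\neq 0$ and $(x^*,\xi^*;y,\eta)\in K$ you need $(x^*,\xi^*;y,\eta)\in W_j$, but $\pi^{-1}(y,\eta)\cap\mC$ is a whole line in the $x$-variable and $W_j$ need not contain the relevant slice. The fix is to note that $\pi$ is injective on $K$ (the $(x,\xi)$-coordinates of $K$ are pinned at $(x^*,\xi^*)$), so $L_j:=\pi(K\cap W_j)$ is a relatively open cover of the compact set $\pi(K)$; choosing $\{\chi_j\}$ supported in small conic neighborhoods of the $L_j$, with $\sum_j\chi_j\equiv 1$ near $\pi(K)$, gives the needed implication. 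Second, ``each $\Psi_j$ is microlocally the identity on $W_j$'' is an overstatement --- only $\sum_j\Psi_j$ is microlocally the identity near $\pi(K)$ --- but writing $\mu-\mu_*=\mu\circ\bigl(I-\sum_j\Psi_j\bigr)+\sum_j(\mu-\mu_j)\circ\Psi_j$ and treating the two pieces separately (the first has no twisted wave front over $\pi(K)$ by the microsupport of $I-\sum_j\Psi_j$; the second vanishes microlocally on $K$ by the subordination just arranged) gives $K\cap WF(\mu-\mu_*)'=\emptyset$ as you claim. With these adjustments the argument is complete.
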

Corollary~\ref{C:Ho} can be proved in the same manner as Corollary~\ref{C:Pet}, where Theorem~\ref{T:Ho} is used in place of Theorem~\ref{T:Pet}. We skip it for the sake of brevity. 

\medskip
The following result is useful to analyze the artifacts when the original singularities are conormal:
\begin{theorem} \label{T:Spread}
Suppose that all the assumptions in Corollary~\ref{C:Ho} hold. Assume further that:
\begin{itemize}
\item [1)] There are at most finitely many $y^* \in \rN^2$ such that $y^* = x^* + t \ve^\perp$ for some $t \in \rN$ and $(y^*, \xi^*) \in WF(f)$. 
\item [2)] For each such $y^*$, $(y^*,\xi^*)$ is a conormal singularity of order $r$ along a curve $S$ which has nonzero curvature at $y^*$.
 \end{itemize}
Then, $(x^*,\xi^*=\ga_* \ve)$ is a conormal singularity of order at most $m+r$ along the line $$\ell=\{y \in \rN^2: y= x^* + t \ve^\perp, t \in \rN\}.$$
\end{theorem}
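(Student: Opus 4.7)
The plan is to reduce to a \emph{model case} in which $f$ equals a sum of conormal distributions concentrated at the finitely many relevant points of $\ell$, and then analyze the FIO/conormal composition on each summand.

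First, I enumerate the points $y^*_1,\dots,y^*_N \in \ell$ (finite by assumption 1) with $(y^*_i,\xi^*) \in WF(f)$. Assumption 2 supplies, for each $i$, a distribution $u_i \in I^r(S_i)$ with $(y^*_i,\xi^*) \notin WF(f-u_i)$. After replacing $u_i$ by $\chi_i u_i$ for a smooth cutoff $\chi_i$ supported in a small neighborhood $V_i$ of $y^*_i$ (this preserves the class $I^r(S_i)$) and arranging the $V_i$ to be pairwise disjoint, I set $u = \sum_i u_i$ and $g = f-u$. By construction, $g$ has no wavefront element of the form $(y,\xi^*)$ with $y \in \ell$. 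The contrapositive of \refcoro{C:Ho} then forces $(x^*,\xi^*)\notin WF(\mT g)$: its compactness hypothesis passes from $f$ to $g$ because $WF(g)\subset WF(f)\cup \bigcup_i N^*S_i$ and the additional contribution is the finite set $\{(y^*_i,\xi^*)\}$; and if $(x^*,\xi^*)$ lay in $WF(\mT g)$, then, picking $s$ with $(x^*,\xi^*)\in WF_s(\mT g)$, the corollary would yield some $(x^*+t_0\ve^\perp,\xi^*)\in WF(g)$, contradicting our construction.

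The heart of the proof is to show that, microlocally near $(x^*,\xi^*)$, each $\mT u_i$ lies in $I^{m+r}(\ell)$. The crucial geometric input is that $\xi^*=\ga_*\ve$ annihilates $T_{y^*_i}S_i$, so $T_{y^*_i}S_i$ is parallel to $\ve^\perp$ and $S_i$ is tangent to $\ell$ at $y^*_i$; together with the nonzero curvature this makes the composition $\mC\circ N^*S_i$ a clean intersection with excess $0$ equal to $N^*\ell$. Concretely, choosing a defining function $h_i$ of $S_i$ near $y^*_i$ with $\nabla h_i(y^*_i)=\ve$ and $\partial_{\ve^\perp}^2 h_i(y^*_i)\ne 0$, and writing $u_i$ as an oscillatory integral with phase $h_i(y)\sg$ and amplitude $b_i \in S^r$, I obtain
\begin{equation*}
\mT u_i(x) = \frac{1}{(2\pi)^{3/2}} \iiint e^{i[(x-y)\cdot\ve\,\tau + h_i(y)\sg]}\, a(x,y,\tau)\, b_i(y,\sg)\, dy\, d\tau\, d\sg.
\end{equation*}
The Hessian of the phase in the auxiliary variables $(y,\sg)$ at the stationary point $y = y^*_i$, $\sg = \tau$ has determinant proportional to $\sg\cdot \partial_{\ve^\perp}^2 h_i(y^*_i)\ne 0$ (this is precisely where curvature enters). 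Applying stationary phase in $(y,\sg)$ collapses the integral to one of the form $\int e^{i(x-x^*)\cdot\ve\,\tau}\,\tilde b(x,\tau)\,d\tau$ with $\tilde b \in S^{m+r}$, exhibiting $\mT u_i$ as a conormal distribution of order $m+r$ along $\ell$ near $(x^*,\xi^*)$. (Alternatively, one may invoke directly the H\"ormander composition theorem for FIOs and Lagrangian distributions and read off the order from the clean-composition formula.)

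Setting $v = \sum_i \mT u_i$, one obtains $v \in I^{m+r}(\ell)$ microlocally near $(x^*,\xi^*)$ with $\mT f - v$ equal to $\mT g$ modulo distributions smooth at $(x^*,\xi^*)$; hence $(x^*,\xi^*) \notin WF(\mT f-v)$, which is by definition the desired conormality of order at most $m+r$. The main technical obstacle is the stationary-phase / clean-composition step: one must verify carefully that the tangency of $S_i$ to $\ell$ combined with non-vanishing curvature gives precisely the nondegeneracy of the auxiliary Hessian required, and that the resulting amplitude sits in $S^{m+r}$ (rather than in $S^{m+r+1/2}$, as naive Sobolev bookkeeping based on \refcoro{C:Ho} might suggest).
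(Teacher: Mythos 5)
Your proposal is correct in substance, but it takes a genuinely different route from the paper. The paper proves Theorem~\ref{T:Spread} in one line: run the same decomposition-of-the-kernel argument as in Corollary~\ref{C:Ho} (and Corollary~\ref{C:Pet}), replacing Theorem~\ref{T:Ho} by the Greenleaf--Uhlmann mapping theorem for pseudo-differential operators with singular symbols acting on conormal distributions ([GrU-Functional, Proposition~2.1]). You instead re-derive the relevant special case of that mapping property from scratch: you decompose the \emph{input} $f=u+g$ into a sum of genuine conormal pieces $u_i\in I^r(S_i)$ plus a microlocally harmless remainder $g$, dispose of $\mT g$ via the contrapositive of Corollary~\ref{C:Ho}, and then compute the FIO--conormal composition $\mT u_i$ directly by stationary phase. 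The geometric key you isolate --- that $\xi^*\perp T_{y^*_i}S_i$ forces $S_i$ to be tangent to $\ell$ at $y^*_i$, so nonzero curvature makes the $(y,\sg)$-Hessian nondegenerate (excess-$0$ clean composition) --- is exactly right, and the bookkeeping ($a\in S^{m+1/2}$, $b_i\in S^r$, the rescaling $\sg=\tau s$, the factor $|\tau|^{-3/2}$ from a $3$-dimensional stationary phase) does land on $S^{m+r}$, recovering the claimed order and correctly explaining the discrepancy with the Sobolev count of Corollary~\ref{C:Ho}. What you gain is a self-contained, concrete argument that exposes the mechanism; what you lose relative to the paper is brevity and the full generality of the cited $I^{p,l}$ theory. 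Two technical points you gloss over and should make explicit if writing this up: (i) $\mT$ is only \emph{microlocally} in $I^m(\mC)$ near $A$, so the explicit oscillatory-integral formula for $\mT u_i$ is valid only after first replacing $\mT$ by a genuine FIO $\mT_*$ with $(x^*,\xi^*;y^*_i,\xi^*)\notin WF(\mu-\mu_*)'$, exactly as in the proof of Corollary~\ref{C:Pet}; and (ii) you should verify, rather than merely assert, that after the cutoffs the compactness hypothesis of Corollary~\ref{C:Ho} carries over to $g$ --- shrinking the supports $V_i$ so that the Gauss map of $S_i$ is injective there, so $WF(\chi_i u_i)$ contributes no new point $(y,\xi^*)$ with $y\in\ell$, is the relevant observation.
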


Theorem~\ref{T:Spread} can be proved in the same manner as Corollary~\ref{C:Ho}, where Theorem~\ref{T:Ho} is replaced by \cite[Proposition 2.1]{GrU-Functional} \footnote{In this reference, the authors consider the mapping property of a class of pseudo-differential operators with the singular symbols between spaces of conormal distributions.}. We skip the details for the sake of brevity.


\section{Statement of main result, its interpretation, and organization of the proof}\label{S:Main}
Let us denote by $W_\Phi$ the polar wedge
$$W_\Phi= \rN_* \cdot \uS_\Phi =\{r \, \theta: r \neq 0, \,\theta \in \uS_\Phi \},$$
and $\chi_{\Phi}$ the characteristic function of its closure $\cl(W_\Phi)$. \\

Assume that $m$ is a nonnegative real number and $\kappa \in C^\infty(\uS^1)$ satisfies (\ref{E:kappa}). Let $\mT_m$ be the linear operator whose Schwartz kernel is:
\begin{equation} \label{E:sch} \mu_m(x,y) = \frac{1}{(2 \pi)^2}\intl_{\rN^2} e^{i (x-y) \cdot \xi } |\xi|^m \, \kappa (\xi/|\xi|)\, \chi_{\Phi}(\xi) \, d \xi.\end{equation}
It can be easily shown that (see, e.g., \cite{FQ13}): $ \mT_0 = \mB_\Phi \mK \mR$ and $ \mT_1 = \mL_\Phi \mK \mR$. Moreover, $\mT_0 = \mB_\Phi \mR$ and $ \mT_1 = \mL_\Phi \mR$, if $\kappa \equiv 1$. 

\medskip

From now on, we will study the general operator $\mT_m$ for all real numbers $m \geq 0$. Our results will translate naturally to $\mB_\Phi \mR$, $\mL_\Phi \mR$, $\mB_\Phi \mK \mR$, and $\mL_\Phi \mK \mR$.

\medskip

Assume that $\kappa$ vanishes to {\bf infinite} order at the boundary points of $\uS_\Phi$. Then  
$$a(x,\xi):=|\xi|^m \, \kappa(\xi/|\xi|) \, \chi_{\Phi}(\xi)$$ is smooth on $\rN^2 \times (\rN^2 \setminus 0)$.
Therefore, $\mT$ is a pseudo-differential operator with the symbol $\sg(x,\xi) \sim a(x,\xi)$ for large $|\xi|$ (see Lemma~\ref{L:PDO}). This, in particular, implies that $WF(\mT_m f) \subset WF(f)$ (see Section~\ref{S:pdo}). That is, $\mT_m$ does not generate artifacts. The reader is referred to \cite{FQ13} for detailed arguments. We do not analyze this case any further in this article.

\medskip

We now concentrate on the case $\kappa$ only vanishes to {\bf finite} order (or does not vanishes at all, as in the case of $\mB_\Phi \mR$ and $\mL_\Phi \mR$) at the boundary points of $\uS_\Phi$. Then, $a(x,\xi)$ is no longer smooth on $\rN^2 \times (\rN^2 \setminus 0)$ with respect to the variable $\xi$. Therefore, $\mT_m$ is not a pseudo-differential operator in the standard sense. It is, instead, a pseudo-differential operator with a singular symbol. As we will show later in Theorem~\ref{T:Main1}, the twisted wave front set of $\mu_m$ is contained in the union of three Lagrangians $\Delta$, $\mC_1$, $\mC_2$. The part of the twisted wave front of $\mu_m$ in $\Delta$, the diagonal, is responsible for the reconstruction of singularities. Those in $\mC_1, \mC_2$ are responsible for the generation of artifacts. By analyzing their strength (i.e., order), see Theorem~\ref{T:Main1}~a)\&b), we can describe the strength of the reconstructed singularities and artifacts, see Sections~\ref{S:S} and \ref{S:Artifacts}. 

\medskip

We now describe our results in details. Let $ \Delta \subset (\mathbb{T}^*\rN^{2} \setminus 0) \times (\mathbb{T}^*\rN^{2} \setminus 0)$ be the diagonal relation $$\Delta = \{(x,\xi; x, \xi): (x,\xi) \in \cT^*\rN^{2} \setminus 0\},$$
and $$\Delta_\Phi=\{(x,\xi; x, \xi)\in \Delta: \xi \in \cl(W_\Phi)\}.$$

For $j=1,2$, $ \mC_j \subset (\mathbb{T}^*\rN^{2} \setminus 0) \times (\mathbb{T}^*\rN^{2} \setminus 0)$ is defined by
\begin{eqnarray*}
\mC_j = \{(x,\ga\, \ve_j; x + t \ve_j^\perp, \ga\, \ve_j): x \in \rN^2,~\ga,t \in \rN,~\ga \neq 0\}.
\end{eqnarray*}
We recall that, as mentioned in the introduction, $$\ve_1 = (\cos \Phi, \sin \Phi) \mbox{ and } \ve_2 = (\cos \Phi, -\sin \Phi).$$


\medskip

Here is the main result of this article:
\begin{theorem}\label{T:Main1}
We have \footnote{The reader is referred to Section~\ref{S:wave} for the definition of the twisted wave front set $WF(\mu_m)'$.} \begin{equation}\label{E:wave-in} WF(\mu_m)' \subset \Delta_\Phi \cup \mC_1 \cup \mC_2.\end{equation}
Furthermore,
\begin{itemize}
\item[a)] Near $\Delta \setminus (\mC_1 \cup \mC_2)$, $\mu_m$ is microlocally in the space $I^{m}(\Delta)$ with the symbol 
$$\sg(x,\xi) \sim |\xi|^m \,\kappa(\xi/|\xi|) \, \chi_{\Phi}(\xi), \quad \mbox{ for lage } |\xi|.$$
\item[b)] Let $\varkappa:[0,2 \pi] \to \rN$ be defined by $\varkappa(\phi) = \kappa(\cos \phi, \sin \phi)$. Assume that $\varkappa$ vanishes to order $k$ at $\phi = \pm \Phi$ \footnote{That is, $\varkappa^{(k)}(\pm \Phi) \neq 0$ and $\varkappa^{(l)}(\pm \Phi)=0$, for all $0 \leq l \leq k-1$.}. Then, near $\mC_j \setminus \Delta$, $\mu_m $ is microlocally in the space $I^{m-k-1/2}(\mC_j)$. Moreover, given the phase function $\phi_j(x,y,\tau) = (x-y) \cdot \ve_j \, \tau,$ its principal symbol is
\begin{eqnarray*}
\sg_0(x, y = x+ t \, \ve_j^\perp, \tau) = \frac{(-1)^j}{\sqrt{2 \pi}}\frac{\varkappa^{(k)} ((-1)^{j+1} \Phi)}{(i\,  \, \sgn (\tau) \, t)^{k+1}}  \, |\tau|^{m-k}.
\end{eqnarray*}
\end{itemize}
\end{theorem}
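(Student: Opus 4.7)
The plan is to pass to polar coordinates $\xi=\tau\,\omega(\phi)$ with $\omega(\phi)=(\cos\phi,\sin\phi)$ and combine the two arcs of $\uS_\Phi$ using $\varkappa(\phi+\pi)=\varkappa(\phi)$ (from the evenness of $\kappa$) by letting $\tau$ range over all of $\rN$. This yields
\[
\mu_m(x,y)=\frac{1}{(2\pi)^2}\intl_{\rN}\intl_{-\Phi}^{\Phi}e^{i\tau(x-y)\cdot\omega(\phi)}\,|\tau|^{m+1}\,\varkappa(\phi)\,d\phi\,d\tau,
\]
isolating the non-smoothness of the angular factor $\chi_\Phi$ to the two endpoints $\phi=\pm\Phi$. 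I then fix a smooth angular partition of unity $1=\psi_0+\psi_++\psi_-$ on $[-\Phi,\Phi]$ with $\psi_\pm$ supported in small neighbourhoods of $\pm\Phi$, giving $\mu_m=\mu_m^0+\mu_m^++\mu_m^-$.

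For $\mu_m^0$, the cutoff $\psi_0\varkappa$ extends first by zero to $\uS^1\setminus\uS_\Phi$ and then by evenness to a smooth function $\tilde\psi_0\kappa$ on $\uS^1$ which vanishes near $\pm\ve_1,\pm\ve_2$. Reverting the polar change yields the form (\ref{E:mu-pdo}) with amplitude $|\xi|^m\,\kappa(\xi/|\xi|)\,\tilde\psi_0(\xi/|\xi|)\in S^m(\rN^2\times\rN^2)$ for large $|\xi|$. Lemma~\ref{L:PDO} then places $\mu_m^0\in I^m(\Delta)$ with this symbol, and since $\tilde\psi_0\equiv 1$ in a conic neighbourhood of any $(x,\xi)$ with $\xi\notin\rN_*\ve_1\cup\rN_*\ve_2$, this proves (a) and contributes $\Delta_\Phi$ to the inclusion (\ref{E:wave-in}).

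For the edge piece $\mu_m^+$ (the case $\mu_m^-$ being symmetric under $\phi\mapsto-\phi$), set $\phi=\Phi+\sigma$ and Taylor-expand
\[
\tau(x-y)\cdot\omega(\Phi+\sigma)=\tau(x-y)\cdot\ve_1+\sigma\,\tau(x-y)\cdot\ve_1^\perp+\tau\sigma^2\,\rho(\sigma).
\]
Off the conic set $\{(x-y)\cdot\ve_1^\perp=0\}$, a smooth change of variable in $\sigma$ linearises the phase, absorbing $\tau\sigma^2\rho(\sigma)$ into a new amplitude that remains a classical symbol in $\tau$. The residual $\sigma$-integral then has the form $\int_{-\infty}^0 e^{i\lambda\sigma}h(\sigma)\,d\sigma$ with $\lambda=\tau(x-y)\cdot\ve_1^\perp$ and $h$ vanishing to exactly order $k$ at $\sigma=0$. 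Integrating by parts $k+1$ times, only the boundary term from the jump of $h^{(k)}$ at $\sigma=0$ survives, producing a classical symbol in $\lambda$ of order $-(k+1)$ whose principal term is $(-1)^k\varkappa^{(k)}(\Phi)/(i\lambda)^{k+1}$.

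Substituting back, $\mu_m^+$ acquires the form (\ref{E:kFIO}) with phase $\phi_1(x,y,\tau)=(x-y)\cdot\ve_1\,\tau$ and amplitude of order $(m+1)-(k+1)=m-k$ in $\tau$; by Lemma~\ref{L:FIO} and Remark~\ref{R:Rule}, $\mu_m^+$ is microlocally in $I^{m-k-1/2}(\mC_1)$, with $WF(\mu_m^+)'\subset\mC_1$ by (\ref{E:mu-FIO}). Evaluating on $\mC_{1,(x,y)}$, where $y=x+t\ve_1^\perp$ so $(x-y)\cdot\ve_1^\perp=-t$, and using $\sgn(\tau)^{k+1}/(i^{k+1}t^{k+1})=1/(i\sgn(\tau)t)^{k+1}$, one recovers the principal symbol in (b) for $j=1$. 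The analogous one-sided calculation at $\phi=-\Phi$ (with $\sigma\ge 0$, flipping the sign of the boundary term to supply the $(-1)^j$ prefactor for $j=2$) gives the statement at $\mC_2$. Combined, these produce (\ref{E:wave-in}), (a), and (b). The main obstacle is the rigorous ``linearise-the-phase and absorb-the-$\sigma^2$-remainder'' step, which is valid only where $(x-y)\cdot\ve_1^\perp\ne 0$ and is precisely the reason (b) is stated microlocally near $\mC_j\setminus\Delta$.
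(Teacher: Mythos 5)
Your polar-coordinate route---combining the two antipodal arcs of $\uS_\Phi$ via the evenness of $\kappa$, splitting off the two angular endpoints with $\psi_\pm$, and then linearising the phase $\tau(x-y)\cdot\omega(\Phi+\sigma)$ by a Morse-type change of variable $\sigma\mapsto\tilde\sigma$---is a genuinely different coordinate choice from the paper's, which stays Cartesian, reduces to a half-space Heaviside model (Proposition~\ref{P:mu}), and substitutes the frequency ratio $\tau=\xi_2/\xi_1$ so that the remaining angular integral already has a linear phase and no Morse lemma is needed. Both routes converge on the same integration-by-parts computation, and your algebra for the boundary term---$h^{(k)}(0)=\varkappa^{(k)}(\Phi)$ in the new variable, $\lambda=-\tau t$, and the $\sgn(\tau)$ bookkeeping---does reproduce the stated principal symbol. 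So the computational core of part b) is sound.

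There is, however, a genuine gap in your treatment of the inclusion \eqref{E:wave-in} and of part a). Your FIO representation of $\mu_m^{\pm}$ exists only microlocally near $\mC_j\setminus\Delta$: the change of variable requires $(x-y)\cdot\ve_j^\perp\neq 0$, and the resulting amplitude blows up like $t^{-(k+1)}$ as $t\to 0$, so $\mu_m^{\pm}\notin I^{m-k-1/2}(\mC_j)$ globally and the appeal to \eqref{E:mu-FIO} to conclude $WF(\mu_m^{\pm})'\subset\mC_j$ is not justified. Concretely, your argument says nothing about $WF(\mu_m^{\pm})'$ at points $(x,\xi;y,\eta)$ with $(x-y)\cdot\ve_j^\perp=0$, which includes both the diagonal and the lines $x-y\parallel\ve_j$; you therefore cannot rule out contributions outside $\Delta_\Phi\cup\mC_1\cup\mC_2$. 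The same lacuna undermines a): for $(x^*,\xi^*;x^*,\xi^*)\in\Delta\setminus(\mC_1\cup\mC_2)$ with $\xi^*$ inside the angular support of $\psi_\pm$, we have $\tilde\psi_0(\xi^*)<1$, so $\mu_m^0$ alone does not carry the full symbol $|\xi|^m\kappa\chi_\Phi$; one must still show that $\mu_m^{\pm}$ is microlocally a pseudo-differential operator (with the complementary piece of the symbol) near such points---or choose the cutoffs $(x^*,\xi^*)$-dependently, which again requires knowing that $\mu_m^{\pm}$ contributes no new wave front set there. The paper closes both holes in a single step by writing $\mu_m^{\pm}(x,y)=k_{\pm}(x-y)$ with $k_{\pm}$ the inverse Fourier transform of a homogeneous distribution $K_{\pm}(\xi)$, whose wave front set lies over the ray through $\pm\ve_j$, and applying \cite[Theorem 8.1.4]{Ho1-old} to get a global bound $WF(\mu_m^{\pm})'\subset\Delta_\Phi\cup\mC_j$ (and a cut-off argument, as in the paper's proof of Proposition~\ref{P:mu}~a), to identify the symbol on $\Delta\setminus\mC_j$). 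You should add that Fourier-transform step; as written, \eqref{E:wave-in} and a) are not proved near $\{(x-y)\cdot\ve_j^\perp=0\}$.
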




\medskip

\begin{rem} \label{R:order} We note that the order $(m-k -\frac{1}{2})$ of $\mu_m$ on $\mC_j \setminus \Delta$, stated in Theorem~\ref{T:Main1}~b), follows from the rule stated in Remark \ref{R:Rule}.
\end{rem}

The reader is referred to Definitions~\ref{D:PDO} \& \ref{D:FIO} for the technical terms used in the statement of Theorem~\ref{T:Main1}~a)~\&~b). We now use Theorem~\ref{T:Main1} (together with Theorem~\ref{T:wave-Ho}, Corollary~\ref{C:Pet}, Corollary~\ref{C:Ho}, and Theorem~\ref{T:Spread}) to analyze the reconstruction of singularities and generation of artifacts due to $\mT_m f$. In the below discussion, we assume that $f \in \mE'(\rN^2)$.

\medskip

Let us start with the the inclusion (\ref{E:wave-in}). Applying Theorem~\ref{T:wave-Ho}, we obtain
\begin{eqnarray*} WF(\mT_m f) \subset WF(\mu_m)' \circ WF(f).\end{eqnarray*}
Therefore,
\begin{equation} \label{E:wave-mT}WF(\mT_m f) \subset \big[\Delta_\Phi \circ WF(f) \big] \cup \big[\mC_1 \circ WF(f) \big]  \cup \big[\mC_2 \circ WF(f) \big]. \end{equation}
The first part on the right hand side is the set of all possible reconstructed singularities of $f$; meanwhile, the other two parts contain all the possible artifacts. Let us now analyze them in more details.

\subsection{\bf Reconstruction of singularities. } \label{S:S} 
From the inclusion (\ref{E:wave-mT}), the set of possible reconstructed singularities is $$\Delta_\Phi \circ WF(f) = \{(x,\xi) \in WF(f): \xi \in \cl(W_\Phi)\}.$$
Therefore, $\mT_m$ does not reconstruct $(x^*,\xi^*) \in WF(f)$ such that $\xi^* \not \in \cl(W_\Phi)$. Such a singularity $(x^*,\xi^*)$ of $f$ is called invisible.

\medskip

On the other hand, assume that $(x^*,\xi^*) \in WF(f)$ and $\xi^* \in W_\Phi$, i.e., $(x^*,\xi^*)$ is a {\bf visible} singularity (see, e.g., \cite{FQ13}). From Theorem~\ref{T:Main1}~a) and  Corollary~\ref{C:Pet},  $$(x^*,\xi^*) \in WF_{s-m} (\mT_m f) \mbox{ if and only if } (x^*,\xi^*) \in WF_s(f).$$ That is, $\mT_m$ reconstructs the singularity of $f$ at $(x^*,\xi^*)$; and the reconstructed singularity is $m$ order(s) stronger than the original singularity. In particular, the visible singularities are reconstructed with the same order if using $\mB_\Phi \mR$ and $\mB_\Phi \mK \mR$. Meanwhile, the visible singularities are emphasized by one order if using $\mL_\Phi \mR$ and $\mL_\Phi \mK \mR$ for the reconstruction.

\subsection{\bf Generation of artifacts.} \label{S:Artifacts} We notice that  $$\mC_j \circ WF(f) = \{(x,\ga\, \ve_j):  (y = x + t \ve_j^\perp, \ga\, \ve_j)  \in WF(f),~ \mbox{for some } t \in \rN , \ga \neq 0\}.$$

Assume that $(x^*,\xi^*)$ is an artifact. That is, $(x^*, \xi^*) \not \in WF(f)$ and $(x^*,\xi^*) \in WF(\mT_m f)$. From (\ref{E:wave-mT}), we obtain $$(x^*,\xi^*) \in \big[(\mC_1 \setminus \Delta) \circ WF(f)\big] \cup \big[(\mC_2 \setminus \Delta) \circ WF(f) \big].$$ 
Therefore, there are $\ga_* \neq 0$ and $j=1$ or $2$ such that $\xi^* = \ga_* \ve_j$. Moreover, there is at least one point $y^* \in \rN^2$ such that \begin{equation} \label{E:co-sing} y^* = x^*+ t \ve_j^\perp, \mbox{ for some } t \neq 0, \mbox{ and } (y^*,\xi^*= \ga_* \ve_j) \in WF(f).\end{equation} Each such $(y^*,\xi^*) \in WF(f)$ is called a singularity {\bf corresponding to (or generating)}  $(x^*,\xi^*)$. Due to Theorem \ref{T:Main1}~b) and  Corollary~\ref{C:Ho}, we obtain that if $(x,\xi) \in WF_{s}(\mT_m f)$ then at least one of its corresponding singularity $(y^*,\xi^*)$ satisfies $(y^*,\xi^*) \in WF_{s+(m-k)}(f)$. That is, using the Sobolev order to indicate the strength, we conclude:
\begin{itemize}
\item[S.1)] The artifacts are {\bf at most} $(m-k)$ order(s) stronger than their strongest generating singularities if $m>k$.
\item[S.2)] The artifacts are {\bf at most} as strong as their strongest generating singularities if $m=k$.
\item[S.3)] The artifacts are {\bf at least} $(k-m)$ order(s) smoother than their strongest generating singularities if $k>m$.
\end{itemize}
Let us assume further that:
\begin{itemize}
\item[A.1)] $(x^*,\xi^*)$ has only finitely many generating singularities $(y^*,\xi^*) \in WF(f)$, and
\item[A.2)] each such generating singularity is conormal of order at most $r$ along a curve $S$ which has nonzero curvature at $y^*$. 
\end{itemize}
Then, due to Theorem~\ref{T:Spread}, $(x^*,\xi^*)$ is a conormal singularity of order at most $(r+m-k - \frac{1}{2})$ along the curve
$$\ell=\{y: y=x^* + t \ve^\perp,~t \in \rN\}.$$
 That is, using the order of conormal singularity to indicate the strength, we conclude for such artifacts $(x^*,\xi^*)$:
\begin{itemize}
\item[S.1')] The artifacts are {\bf at most} $(m-k-\frac{1}{2})$ order(s) stronger than their strongest generating singularities if $m>k+\frac{1}{2}$.
\item[S.2')] The artifacts are {\bf at most} as strong as their strongest generating singularities if $m=k+\frac{1}{2}$,  
\item[S.3')] The artifacts are {\bf at least} $(k+\frac{1}{2}-m)$ order(s) smoother than their strongest generating singularities if $m<k+\frac{1}{2}$. 
\end{itemize}

\medskip

The descriptions in (S.1-3) and (S1'-3') can be easily interpreted for $\mB_\Phi \mR$, $\mL_\Phi \mR$, $\mB_\Phi \mK \mR$, and $\mL_\Phi \mK \mR$, by plugging the corresponding values of $m$ and $k$.

\medskip

In particular, for the case of $\mB_\Phi \mR$ (i.e., $m=k=0$), using (S.2), we obtain that the artifacts are at most as strong as their strongest generating singularities. If we assume that (A.1) and (A.2) hold, then due to (S.3'), the artifacts are (at least) half an order {\bf weaker} than the strongest generating singularities. This fact can be observed in \cite[Figure 1]{FQ13}, where the artifacts are indeed visually weaker than the generating singularities.

\subsection{Main ideas and structure of the proof of Theorem \ref{T:Main1}}
Let us briefly discuss the main ideas of the proof of Theorem \ref{T:Main1}. The proof of (\ref{E:wave-in}) follows from the approach in \cite{FQ13}, which uses the relationship between the wave front set of a homogeneous distribution and that of its Fourier transform \cite[Theorem  8.1.4]{Ho1-old}. To prove parts a) and b), we use a partition of unity to decompose the integral (\ref{E:sch}) into five parts. The first one is an oscillatory integral with smooth amplitude and can be analyzed using the standard theory of pseudo-differential operator. Each of the other four integrals concentrates on a part (a ray) of $\partial W_\Phi$. They can be analyzed using the common model introduced in Section \ref{S:Model}.

We will proceed the proof of Theorem \ref{T:Main1} as follows. In Section \ref{S:Model}, we introduce a family of model oscillatory integrals. We show that their twisted wavefront set belongs to the union of two intersecting Lagrangians, one is the diagonal. We also proceed to compute their principal or full symbol on these Lagrangians. In Section \ref{S:Proof}, we present the proof of Theorems \ref{T:Main1}, using our understanding in Section~\ref{S:Model}.

\begin{rem} In our subsequent papers \cite{Art-Sphere,Art-Sphere-Curved}, we adapt the technique developed in this article to study the artifacts in limited data problem of spherical mean transform, which arises in several imaging modalities (such as thermo/photo-acoustic tomography, ultrasound tomography). 
\end{rem}

\section{Model oscillatory integrals}\label{S:Model}
\noindent Let $m$ be a nonnegative real number and $\rho \in C^\infty(\rN^2 \setminus 0)$ be homogeneous of degree zero \footnote{A function $\rho \in C^\infty(\rN^2 \setminus 0)$ is homogeneous of degree $\ga$ if for all $\tau>0$, $\rho(\tau \xi)= \tau^\ga \rho(\xi)$.} and $\rho(\xi_1,.)$ is compactly supported for any $\xi_1 \in \rN$. We consider the oscillatory integral 
\begin{equation} \label{E:mupm} \mu_\pm(x,y) =\frac{1}{(2\pi)^2} \intl_{\rN^2} e^{i (x-y) \cdot \xi } |\xi|^m \, \rho(\xi) \, \bH(\pm \xi_2) \, d \xi.\end{equation}
Here, $\bH$ is the Heaviside function, defined by
\begin{eqnarray*}
\bH(s) = \left\{\begin{array}{l}1,\quad s \geq 0, \\[3 pt] 0, \quad s<0. \end{array} \right.
\end{eqnarray*}
\medskip

\noindent We also recall the diagonal canonical relation in $(\cT^* \rN^2 \setminus 0) \times (\cT^* \rN^2 \setminus 0)$ $$\Delta = \{(x,\xi; x, \xi): (x,\xi) \in \cT^*\rN^{2} \setminus 0\},$$
and define $\mC \subset (\cT^* \rN^2 \setminus 0) \times (\cT^* \rN^2 \setminus 0)$ by
$$\mC = \{(x,\xi; y, \xi) \in (\cT^* \rN^2 \setminus 0) \times (\cT^* \rN^2 \setminus 0): x_1-y_1=0,\xi_2=0\}.$$

\medskip


\begin{prop} \label{P:mu} We have
\begin{equation} \label{E:wave-mu} WF(\mu_\pm)' \subset \Delta \cup \mC.\end{equation}
Furthermore,
\begin{itemize}
\item[a)] Near $\Delta \setminus  \mC$, $\mu_\pm$ is microlocally in $I^{m}(\Delta)$ with the full symbol  \footnote{From now on, we will drop the term ``for large $|\xi|$'' for the sake of convenience.}:
$$\sg(x,\xi) \sim |\xi|^m \, \rho(\xi) \, \bH(\pm \xi_2).$$

\medskip
\item[b)] Assume that $\rho(\xi)$ vanishes to order $k$ at $\xi_2 = 0$ for any fixed $\xi_1 \neq 0$. Then, near $\mC \setminus \Delta$, $\mu_\pm$ is microlocally in the space $I^{m-k-\frac{1}{2}}(\mC)$  with the principal symbol
\begin{eqnarray} \label{E:sg0} \sg_0(x,y, \xi_1) =\left\{\begin{array}{ll} \frac{\pm 1}{\sqrt{2 \pi}} \frac{1}{[ i(y_2-x_2)]^{k+1}} \, \varphi_+^{(k)}(0) \,  \xi_1^{m-k},&\hskip  0 pt \xi_1>0, \\[6 pt] \frac{\pm 1}{\sqrt{2 \pi}} \frac{1}{[i(y_2-x_2)]^{k+1}} \, \varphi_-^{(k)}(0) \,  |\xi_1|^{m-k}, &\hskip 0 pt \xi_1<0, \end{array} \right.
\end{eqnarray} given the phase function $\phi(x,y,\xi_1) = (x_1-y_1) \xi_1$. Here, $\varphi_\pm \in C^\infty(\rN)$ is defined by the formula
\begin{equation} \label{E:phip} \varphi_\pm \big(\frac{\xi_2}{|\xi_1|}\big) = \rho(\xi),\quad \mbox{ for all } \pm \xi_1>0 \mbox{ and } \xi_2 \in \rN.\end{equation}
\end{itemize}
\end{prop}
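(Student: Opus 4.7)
My plan is to split the amplitude of $\mu_\pm$ according to proximity to the line $\{\xi_2=0\}$, treat the smooth piece as a pseudo-differential operator, and reduce the singular piece to a Fourier integral operator with canonical relation $\mC$ by integrating by parts in $\xi_2$.

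First I would pick $\psi\in C_c^\infty(\rN)$ with $\psi\equiv 1$ on $[-1/4,1/4]$ and $\supp\psi\subset(-1/2,1/2)$, and decompose
\begin{equation*}
|\xi|^m\rho(\xi)\bH(\pm\xi_2)=A_1(\xi)+A_2(\xi),\qquad A_2(\xi)=|\xi|^m\rho(\xi)\bH(\pm\xi_2)\psi(\xi_2/|\xi|),
\end{equation*}
so that $A_1$ vanishes in a conic neighborhood of $\{\xi_2=0\}$ while $A_2$ is concentrated there. This induces $\mu_\pm=\mu_\pm^{(1)}+\mu_\pm^{(2)}$. The amplitude $A_1$ is smooth on $\rN^2\setminus 0$ (on $\{\pm\xi_2>0\}$ the Heaviside is $1$; on $\{\pm\xi_2<0\}$ we have $A_1\equiv 0$; near $\xi_2=0$ the cutoff $1-\psi(\xi_2/|\xi|)$ vanishes), and it satisfies the symbol estimates of order $m$. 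Hence \reflemm{L:PDO} gives $\mu_\pm^{(1)}\in I^m(\Delta)$ with full symbol $\sim A_1$, and since $A-A_1=A_2$ is in $S^{-\infty}$ in any conic region bounded away from $\{\xi_2=0\}$, this yields part~(a).

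For part~(b), I would integrate first in $\xi_2$. Setting
\begin{equation*}
J_\pm(\xi_1,x_2-y_2)=\int_{\pm\xi_2\ge 0}e^{i(x_2-y_2)\xi_2}|\xi|^m\rho(\xi)\psi(\xi_2/|\xi|)\,d\xi_2,
\end{equation*}
so that $\mu_\pm^{(2)}(x,y)=(2\pi)^{-2}\int e^{i(x_1-y_1)\xi_1}J_\pm(\xi_1,x_2-y_2)\,d\xi_1$, I would iterate $k+1$ integrations by parts in $\xi_2$ using $e^{i(x_2-y_2)\xi_2}=(i(x_2-y_2))^{-1}\partial_{\xi_2}e^{i(x_2-y_2)\xi_2}$. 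Boundary terms at $\pm\infty$ vanish by the compact support of $\rho(\xi_1,\cdot)$, and the first $k$ boundary terms at $\xi_2=0$ vanish because $\rho$ vanishes to order $k$ there and $|\xi|^m$ is even in $\xi_2$, so Leibniz picks out only the all-derivatives-on-$\rho$ contribution at leading order. The $(k+1)$-th boundary term, using $\rho(\xi)=\varphi_{\sgn(\xi_1)}(\xi_2/|\xi_1|)$ to convert $\partial_{\xi_2}^k\rho(\xi_1,0)=\varphi_{\sgn(\xi_1)}^{(k)}(0)/|\xi_1|^k$, produces
\begin{equation*}
J_\pm(\xi_1,x_2-y_2)\sim\frac{\pm\,|\xi_1|^{m-k}\,\varphi_{\sgn(\xi_1)}^{(k)}(0)}{(i(y_2-x_2))^{k+1}}
\end{equation*}
modulo a remainder of strictly lower order in $|\xi_1|$; the sign flip $(i(x_2-y_2))^{k+1}=(-1)^{k+1}(i(y_2-x_2))^{k+1}$ cancels the $(-1)^{k+1}$ coming from the repeated integration by parts. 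Substituting back presents $\mu_\pm^{(2)}$ as a Fourier integral operator with phase $(x_1-y_1)\xi_1$, canonical relation $\mC$, and amplitude of order $m-k$ in $|\xi_1|$; by \refrem{R:Rule} and \reflemm{L:FIO} the FIO order is $m-k-\tfrac{1}{2}$, and renormalizing $(2\pi)^{-2}\to(2\pi)^{-3/2}$ recovers the stated principal symbol.

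For the wavefront containment $WF(\mu_\pm)'\subset\Delta\cup\mC$ I would combine $WF(\mu_\pm^{(1)})'\subset\Delta$ (automatic from the pseudo-differential description) with a direct argument for $\mu_\pm^{(2)}$: at any $(x,\xi;y,\xi)$ with $\xi_2\neq 0$, $A_2$ vanishes in a conic neighborhood of $\xi$, so $\mu_\pm^{(2)}$ is microlocally smoothing there; at points with $\xi_2=0$ and $x_1\neq y_1$, integration by parts in $\xi_1$ using $e^{i(x_1-y_1)\xi_1}$ produces rapid decay in the dual direction. The hardest part will be the bookkeeping in the $\xi_2$-integration: verifying that each boundary term at $\xi_2=0$ of order below $k$ really vanishes (which requires the Leibniz expansion together with the evenness of $|\xi|^m$ in $\xi_2$), and showing that the remainder $J_\pm^{(k+1)}$ yields an amplitude in $S^{m-k-1}$ uniformly in $(x,y)$ bounded away from $\{y_2=x_2\}$. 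Propagating these symbol estimates through the iteration produces a full asymptotic expansion and hence the claimed membership in $I^{m-k-1/2}(\mC)$.
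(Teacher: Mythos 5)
Your iterated integration-by-parts argument for part (b) is essentially the paper's calculation in different coordinates: the paper first substitutes $\xi_2=\xi_1\tau$ and integrates by parts in $\tau$, while you integrate by parts directly in $\xi_2$; after tracking the Jacobian, the boundary terms coincide, and your sign bookkeeping (the $(-1)^{k+1}$ from repeated integration by parts cancelling against $(i(x_2-y_2))^{k+1}=(-1)^{k+1}(i(y_2-x_2))^{k+1}$) is correct. You should note, though, that the auxiliary cutoff $\psi(\xi_2/|\xi|)$ is unnecessary in (b): the standing hypothesis that $\rho(\xi_1,\cdot)$ is compactly supported, combined with degree-zero homogeneity, already forces $\supp\rho$ into a proper subcone of $\{|\xi_2|\le C|\xi_1|\}$, which is what the paper exploits.

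There is, however, a real error in your argument for part (a), and the same error recurs in your sketch of the wavefront inclusion. You assert that ``$A_2=A-A_1$ is in $S^{-\infty}$ in any conic region bounded away from $\{\xi_2=0\}$'' and that ``at any $(x,\xi;y,\xi)$ with $\xi_2\neq 0$, $A_2$ vanishes in a conic neighborhood of $\xi$.'' Both statements are false for your fixed cutoff $\psi$: $A_2=|\xi|^m\rho(\xi)\bH(\pm\xi_2)\psi(\xi_2/|\xi|)$ vanishes only on the conic region $\{|\xi_2|\ge |\xi|/2\}$, not on every cone disjoint from $\{\xi_2=0\}$. Consequently, for $\xi^*$ with $0<|\xi_2^*|<|\xi^*|/4$ your $\mu_\pm^{(1)}$ has symbol $A_1=0$ near $\xi^*$, so the microlocal symbol you would read off for $\mu_\pm$ there is $0$, not the claimed $|\xi|^m\rho(\xi)\bH(\pm\xi_2)$. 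The conclusion is still true, but it needs a different justification: either observe that $\mu_\pm^{(2)}$ is \emph{also} microlocally a pseudo-differential operator near every $\xi^*$ with $\xi_2^*\neq 0$ (because $A_2$ is smooth there), with symbol $A_2$, so that $\mu_\pm^{(1)}+\mu_\pm^{(2)}$ has symbol $A_1+A_2=A$; or do what the paper does and choose a conic cutoff $\Cf$ depending on $\xi^*$ — supported in $\{\xi_2\neq 0\}$ and identically $1$ near $\xi^*$ — so that $\mu_*$ built from $\Cf\cdot A$ is a genuine pseudo-differential operator agreeing microlocally with $\mu_\pm$ at $(x^*,\xi^*;x^*,\xi^*)$. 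In either case one cannot get away with a single fixed decomposition.

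Finally, on the wavefront inclusion: the paper's route is quite different from yours. It uses the convolution structure $\mu_\pm(x,y)=k(x-y)$ together with H\"ormander's Theorem~8.1.4 on the wave front set of a homogeneous distribution and its Fourier transform; this immediately gives $WF(k)\subset\{x_1=0,\xi_2=0\}\cup\{(0,\xi):\xi\in\supp K\}$ and hence $(\ref{E:wave-mu})$. Your direct argument can be made to work (fixing the issue noted above and adding the observation that the twisted wavefront of a convolution kernel is automatically of the form $(x,\xi;y,\xi)$), but it is longer and you would still need the Fourier-transform argument or an equivalent to handle the points over $\{\xi_2=0\}$ rigorously; integration by parts in $\xi_1$ for $x_1\neq y_1$ is the right idea but is only one case of what has to be checked.
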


Before proving the Proposition \ref{P:mu}, we would like to point out that the function $\varphi_\pm \in C^\infty(\rN)$ in (\ref{E:phip}) is well-defined since $\rho$ is homogeneous of degree zero. Moreover, since $\rho$ vanishes to order $k$ at $\xi_2=0$, $\varphi_\pm(\tau)$ also vanish to order $k$ at $\tau=0$.

\begin{proof} We only need to prove the proposition for $\mu_+$. The proof for $\mu_-$ is similar.

\medskip

\noindent {\bf Proof for (\ref{E:wave-mu}).} We follow the approach in \cite{FQ13}. Let $k$ be defined by
\begin{equation}\label{E:k} k(x) =\frac{1}{(2\pi)^2} \intl_{\rN^2} e^{i x \cdot \xi } |\xi|^m \, \rho(\xi) \, \bH(\xi_2) \, d \xi.\end{equation}
Then,
$$\mu_+(x,y) =k(x-y).$$
Therefore, see \cite[page 270]{Ho1-old},
\begin{equation} \label{E:mf}  WF(\mu_+) \subset \{(x,\xi;y,-\xi): (x-y,\xi) \in WF(k)\}.\end{equation}

\medskip

Due to (\ref{E:k}), up to a constant multiple, the Fourier transform of $k$ is $$K(\xi) = |\xi|^m \, \rho(\xi) \,\bH(\xi_2),$$
which is a homogenous distribution with wave front set $$WF(K) \subset \{(\xi,x): \xi_2=0, x_1=0\} \cup \{(0,x): x \in \rN^2\}.$$
We recall the following rule for wave front set of homogeneous distribution (see \cite[Theorem  8.1.4]{Ho1-old}):
\begin{eqnarray*} (x,\xi) \in WF(k) & \Longleftrightarrow & (\xi,-x) \in WF(K),  \quad \mbox{ if } \xi \neq 0 \mbox{ and } x \neq 0,\\
(0,\xi) \in WF(k) & \Longleftrightarrow & \xi  \in \supp(K),  \quad \mbox{ if } \xi \neq 0.
\end{eqnarray*}
Therefore, \begin{eqnarray} \label{E:wk}  WF(k) \subset \{(x,\xi): x_1=0, \xi_2 =0\} \cup \{(0,\xi): \xi \in \supp(K)\} .\end{eqnarray}

\medskip

Combining (\ref{E:mf}) and (\ref{E:wk}), arrive to 
\begin{multline*}  WF(\mu_+) \subset \{(x,\xi; y, -\xi): x_1-y_1=0, \xi_2 =0\} \\ \cup \{(x,\xi;y,-\xi): x-y=0,~\xi \in \supp(K) \}.\end{multline*}
That is, 
\begin{equation}\label{E:wfu}  WF(\mu)' \subset \mC \cup \{(x,\xi;x,\xi): ~\xi \in \supp(K) \}.\end{equation}
In particular, this implies
\begin{eqnarray*}  WF(\mu)' \subset \mC \cup \Delta.\end{eqnarray*}
We have finished the proof for (\ref{E:wave-mu}). 

\medskip

\noindent{\bf Proof for a).} Let $(x^*,\xi^*;x^*,\xi^*) \in \Delta \setminus \mC$, then $\xi^*_2 \neq 0$. Let $\Cf \in C^\infty(\rN^2 \setminus 0)$ be homogeneous of degree zero such that $\Cf(\xi)=1$ in an open cone $V_0$ containing $\xi^*$ and $\Cf(\xi) = 0$ in a conic neighborhood of the set $\{\xi: \xi_2 =0, \xi_1 \neq 0\}$.  We define
\begin{equation*}  \mu_*(x,y) =\frac{1}{(2\pi)^2} \intl_{\rN^2} e^{i (x-y) \cdot \xi } |\xi|^m \, \rho(\xi) \,\Cf(\xi) \, \bH(\xi_2) \, d \xi.\end{equation*}
We observe that the function
$$b(x,\xi)= |\xi|^m \, \rho(\xi) \,\Cf(\xi) \, \bH(\xi_2)$$ satisfies $b(x,\xi)  \in S^{m} (\rN^2 \times \rN^2)$ for lage $|\xi|$ and $\partial_x^\ag b(x,\xi)$ is locally integrable with respect to $\xi$ for any multi-index $\ag$ and any $x \in \rN^2$. Therefore, $\mu_* \in I^m(\Delta)$ with the symbol $\sg^* \sim b$ (see Lemma~\ref{L:PDO}). In particular, in the conic neighborhood $\rN^2 \times V_0$ of $(x^*,\xi^*)$, $$\sg^*(x,\xi) \sim |\xi|^m \, \rho(\xi) \, \bH(\xi_2).$$
It, therefore, suffices to prove (see Definition~\ref{D:PDO}): $$(x^*,\xi^*; x^*,\xi^*) \not \in WF(\mu_+ - \mu_*)'.$$

Indeed, similarly to (\ref{E:wfu}), we obtain
\begin{equation*}  WF(\mu -\mu_*)' \subset \mC \\ \cup \{(x,\xi;x,\xi): ~\xi \in \supp(K_0) \}.\end{equation*}
Here, 
$$K_0(\xi) = |\xi|^m \,\big[1-\Cf(\xi)\big] \, \rho(\xi) \,\bH(\xi_2)$$
is, up to a constant multiple, the Fourier transform of $(\mu - \mu_*)$.

\medskip

\noindent Since $\xi_2^* \neq 0$, one easily sees $(x^*,\xi^*;y^*,\xi^*) \not \in \mC$. Moreover, since $1-\Cf(\xi) =0$ in a neighborhood $V_0$ of $\xi^*$, we have
$$ (x^*,\xi^*;x^*,\xi^*) \not \in  \{(x,\xi;x,\xi): ~\xi \in \supp(K_0) \}.$$
Therefore, $$(x^*,\xi^*;x^*,\xi^*) \not \in WF(\mu_+ - \mu_*)'.$$
This finishes the proof for a).

\medskip

\noindent{\bf Proof for b).} We now analyze $\mu$ on $\mC \setminus \Delta$. Let $(x^*, \xi^*;  y^*, \xi^*) \in \mC \setminus \Delta$, then $x^*_2 \neq y^*_2$. Let $\mO \subset \rN^2 \times \rN^2$ be an open set containing $(x^*,y^*)$ such that $x_2 \neq y_2$ for any $(x,y) \in \mO$. It suffices to prove that $\mu_+|_{\mO} $ is in $I^{m-k-\frac{1}{2}}(\mC)$ with the stated symbol (see Definition~\ref{D:FIO}). 

\medskip

For $(x,y) \in \mO$, let us write
\begin{equation} \label{E:muc} \mu_+(x,y) = \frac{1}{(2 \pi)^2}  \intl_{\rN} e^{i (x_1-y_1) \xi_1 }  \, a(x,y,\xi_1) \,d \xi_1,\end{equation} where
\begin{equation} \label{E:a} a(x,y,\xi_1) = \intl_{\rN} e^{i (x_2-y_2) \xi_2 } \, |\xi|^m \, \rho(\xi) \, \bH(\xi_2) \, d\xi_2.\end{equation}
Since $\rho(\xi_1,.)$ is compactly supported, the integral on the right hand side of (\ref{E:a}) is, in fact, over a finite interval. Therefore, one can easily see that $a \in C^\infty((\rN^2 \times \rN^2) \times (\rN \setminus 0))$. We can also observe that $\pdh_x^\ag \pdh_y^\bg a(x,y,\xi_1)$ is locally integrable with respect to $\xi_1$ for any $(x,y) \in \rN^2 \times \rN^2$ and any multi-indices $\ag,\bg$. Due to Lemma~\ref{L:FIO}  \footnote{In this situation, $\ve = (0,1)$.}, it suffices to show that $a(x,y,\xi_1) \in S^{m-k}(\mO \times \rN)$ for large $|\xi_1|$ and its leading term is
\begin{eqnarray} \label{E:amk} a_{m-k}(x,y, \xi_1) =\left\{\begin{array}{ll} \frac{1}{[ i(y_2-x_2)]^{k+1}} \, \varphi_+^{(k)}(0) \,  \xi_1^{m-k},&\hskip  0 pt \xi_1>0, \\[6 pt] \frac{1}{[i(y_2-x_2)]^{k+1}} \, \varphi_-^{(k)}(0) \,  |\xi_1|^{m-k}, &\hskip 0 pt \xi_1<0, \end{array} \right.
\end{eqnarray}
\medskip

Indeed, let us consider $\xi_1>0$. Using the change of variable $\xi_2= \xi_1 \tau$, we obtain
\begin{eqnarray*} 
a(x,y,\xi_1) &=&  \xi_1^{m+1} \intl_0^\infty e^{i \, (x_2-y_2) \, \xi_1 \,\tau } \,(1+\tau^2)^{m/2} \,  \rho(\xi_1,\tau \, \xi_1) \,  d \, \tau. \end{eqnarray*}
Recalling that $\varphi_+: \rN \to \rN$ is defined by
\begin{equation*}  \varphi_+\big(\frac{\xi_2}{\xi_1}\big) = \rho(\xi),\quad \mbox{ for all } \xi_1>0 \mbox{ and } \xi_2 \in \rN,\end{equation*}
we arrive to
\begin{eqnarray*}
a(x,y,\xi_1) =  \xi_1^{m+1} \intl_0^\infty e^{i \, (x_2-y_2) \, \xi_1 \,\tau } \,\psi_+(\tau) \,  d \, \tau, \mbox{ for all } \xi_1>0,
\end{eqnarray*}
where $\psi_+(\tau) = (1+\tau^2)^{m/2} \,  \varphi_+(\tau).$
Since  $x_2 \neq y_2$ for all $(x,y) \in \mO$, we can write:
\begin{eqnarray*} 
a(x,y,\xi_1) =  \frac{1}{i(x_2-y_2)} \, \xi_1^m \intl_0^\infty (e^{i \, (x_2-y_2) \, \xi_1 \,\tau })_\tau \, \psi_+(\tau) \, d \, \tau.
\end{eqnarray*}
Taking integration by parts and noticing that $\psi_+$ is compactly supported, we obtain
\begin{eqnarray*} 
a(x,y,\xi_1) =  \frac{1}{i(y_2-x_2)} \, \xi_1^m \Big( \psi_+(0) +  \intl_0^\infty e^{i \, (x_2-y_2) \, \xi_1 \,\tau }  \, \psi_+'(\tau) \,  d \, \tau \Big).
\end{eqnarray*}
Continuing the successive integration by parts, we arrive to
\begin{eqnarray*} \mbox{} \quad \quad
a(x,y,\xi_1) =  \sum_{l=0}^{k+1} \frac{1}{[i(y_2-x_2)]^{l+1}} \, \psi_+^{(l)}(0) \,  \xi_1^{m-l} + R_{+}(x,y,\xi_1), \mbox{ for } \xi_1 >0,
\end{eqnarray*}
where
$$R_{+} (x,y,\xi_1) = \frac{1}{[i(y_2-x_2)]^{k+2}} \, \xi_1^{m-k-1} \intl_0^\infty e^{i \, (x_2-y_2) \, \xi_1 \,\tau }  \, \psi_+^{(k+2)}(\tau) \,  d \, \tau.$$
Since $\psi_+^{(l)}(0)=0$ for all $0 \leq l \leq k-1$, we obtain
\begin{multline} \label{E:R1}
a(x,y,\xi_1) =   \frac{1}{[i(x_2-y_2)]^{k+1}} \, \psi_+^{(k)}(0) \,  |\xi_1|^{m-k} \\ + \frac{1}{[i(x_2-y_2)]^{k+2}} \, \psi_+^{(k+1)}(0) \,  |\xi_1|^{m-k-1}  + R_{+}(x,y,\xi_1),\quad \xi_1>0.
\end{multline}
Using the same integration by parts technique as above, one can easily show that the function
$$r_{+}(x,y,\xi_1) = \intl_0^\infty e^{i \, (x_2-y_2) \, \xi_1 \,\tau }  \, \psi_+^{(k+2)}(\tau) \,  d \, \tau$$
satisfies $r_{+}(x,y,\xi_1) \in S^{0}(\mO \times \rN_+)$ for large $|\xi_1|$. Therefore, 
$$R_{+}(x,y,\xi_1)\in S^{m-k-1}(\mO \times \rN_+), \mbox{ for large } |\xi_1|.$$

\medskip

Similarly 
\begin{multline}\label{E:R2}\mbox{} \quad
a(x,y,\xi_1) =  \frac{1}{[i(x_2-y_2)]^{k+1}} \, \psi_-^{(k)}(0) \,  |\xi_1|^{m-k} \\ + \frac{1}{[i(x_2-y_2)]^{k+2}} \, \psi_-^{(k+1)}(0) \,  |\xi_1|^{m-k-1} + R_{-}(x,y,\xi_1),\quad \xi_1<0. 
\end{multline}
where $\psi_-(\tau) = (1+\tau^2)^{m/2} \,  \varphi_-(\tau)$ and
$$R_{-}(x,y,\xi_1)\in S^{m-k-1}(\mO \times \rN_+), \mbox{ for large } |\xi_1|.$$

\medskip

Therefore, from (\ref{E:R1}) and (\ref{E:R2}), $a(x,y,\xi_1) \in S^{m-k}(\mO \times \rN)$ for large $|\xi_1|$, with the top order term
\begin{eqnarray*} a_{m-k}(x,y, \xi_1) =\left\{\begin{array}{ll} \frac{1}{[i (y_2-x_2)]^{k+1}} \, \psi_+^{(k)}(0) \,  \xi_1^{m-k},&\hskip  0 pt \xi_1>0, \\[6 pt] \frac{1}{[i(y_2-x_2)]^{k+1}} \, \psi_-^{(k)}(0) \,  |\xi_1|^{m-k}, &\hskip 0 pt \xi_1<0. \end{array} \right.
\end{eqnarray*}
From the definition of $\psi_\pm$, it is easy to show that $a_{m-k}$ satisfies (\ref{E:amk}). This finishes our proof.
\end{proof}

\medskip

\noindent We now consider a generalization of $\mu_\pm$. Namely, let $\ve \in \rN^2$ be a unit vector and let us consider the distributions
\begin{equation} \label{E:mue} \mu_{\pm \ve} (x,y) = \frac{1}{(2 \pi)^2}\intl_{\rN^2} e^{i (x-y) \cdot \xi} |\xi|^m \, \rho_{\ve} (\xi) \, \bH(\pm \ve^\perp \cdot \xi) \, d \xi.\end{equation}
Let
$$\mC_\ve= \{(x,s \, \ve; x+ t \, \ve^\perp , s \, \ve): x \in \rN^2, \, t, s \in \rN, \, \ga \neq 0 \}. $$
\begin{prop}\label{P:Rot}
We have $$WF(\mu_{\pm \ve})' \subset \Delta \cup \mC_\ve.$$ 
\begin{itemize}
\item[a)] Near $\Delta \setminus \mC_\ve$, $\mu_{\pm \ve}$ is in the space $I^{m}(\Delta)$ with the symbol:
$$\sg(x,\xi) \sim |\xi|^m \, \rho_{\ve} (\xi) \, \bH(\pm \ve^\perp \cdot \xi). $$ 
\item[b)] Assume that $\rho_\ve$ vanishes to order $k$ on the line $\ve^\perp \cdot \xi =0$. Then, near $\mC_\ve \setminus \Delta$, $\mu_{\pm \ve}$ is microlocally in the space $I^{m-k-1/2}(\mC_\ve)$. Moreover, given the phase function $\phi(x,y,\tau) = (x-y) \cdot \ve \, \tau$,  its principal symbol is given by
\begin{eqnarray*} \sg_0(x, x+ t \, \ve^\perp, \tau) = \left\{\begin{array}{ll} \pm  \frac{1}{\sqrt{2 \pi}}\frac{1}{(i \, t)^{k+1}} \, \varphi_{+}^{(k)}(0) \, \tau^{m-k},&\hskip 0 pt \tau>0, \\[6 pt] \pm \frac{1}{\sqrt{2 \pi}}\frac{1}{( i \, t)^{k+1}} \, \varphi_{-}^{(k)}(0) \, |\tau|^{m-k},&\hskip 0 pt \tau<0. \end{array} \right.
\end{eqnarray*}
Here, $\varphi_\pm: \rN \to \rN$ satisfies
$$\varphi_\pm \big(\frac{\xi \cdot \ve^\perp}{|\xi \cdot \ve|}\big)= \rho_\ve(\xi) ,\quad \mbox{ for all $\xi$ such that } \pm \xi \cdot \ve >0.$$ 
\end{itemize}
\end{prop}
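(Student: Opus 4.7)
The plan is to reduce \refprop{P:Rot} to \refprop{P:mu} by a rotation of the frequency and spatial variables. Let $R\in SO(2)$ be the rotation characterized by $R(1,0)=\ve$ and $R(0,1)=\ve^\perp$. Substituting $\xi=R\eta$ in \eqref{E:mue} and using $|R\eta|=|\eta|$, $(x-y)\cdot R\eta=(R^\top(x-y))\cdot\eta$, and $\ve^\perp\cdot R\eta=\eta_2$, I would obtain
\[
\mu_{\pm\ve}(x,y)=\widetilde{\mu}_\pm(R^\top x,\,R^\top y),
\]
where $\widetilde{\mu}_\pm$ is the distribution \eqref{E:mupm} built from $\widetilde{\rho}(\eta):=\rho_\ve(R\eta)$. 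Smoothness, homogeneity of degree zero, compact support in $\eta_2$ for each fixed $\eta_1\ne 0$, and $k$-th order vanishing at $\eta_2=0$ (the hypotheses of \refprop{P:mu}) are all preserved under precomposition with the linear isomorphism $R$, so \refprop{P:mu} applies to $\widetilde{\mu}_\pm$.

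The second step is to transport the conclusions via the diffeomorphism $x\mapsto R^\top x$. Wavefront sets transform covariantly, and the lifted canonical transformation $(x,\xi)\mapsto(R^\top x,R^\top\xi)$ maps $\Delta\to\Delta$ and $\mC\to\mC_\ve$, giving $WF(\mu_{\pm\ve})'\subset\Delta\cup\mC_\ve$. The pseudo-differential structure near $\Delta\setminus\mC$ pulls back to pseudo-differential structure near $\Delta\setminus\mC_\ve$ of the same order $m$, and likewise the FIO structure on $\mC\setminus\Delta$ passes to FIO structure on $\mC_\ve\setminus\Delta$ of order $m-k-\tfrac12$. The diagonal symbol transforms as
\[
\sg(x,\xi)\sim |R^\top\xi|^m\,\widetilde{\rho}(R^\top\xi)\,\bH(\pm (R^\top\xi)_2) = |\xi|^m\,\rho_\ve(\xi)\,\bH(\pm\ve^\perp\cdot\xi),
\]
which is the symbol claimed in part a).

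For the principal symbol on $\mC_\ve$, write $y=x+t\ve^\perp$, so that $R^\top(y-x)=t(0,1)$ and hence $\widetilde{y}_2-\widetilde{x}_2=t$. Under the identification $\eta_1=\tau$, the phase $(x-y)\cdot\ve\,\tau$ coincides with $(\widetilde{x}_1-\widetilde{y}_1)\eta_1$ used in \refprop{P:mu} b). The defining condition $\pm\xi\cdot\ve>0$ for $\varphi_\pm$ in \refprop{P:Rot} becomes $\pm\eta_1>0$, and the identity $\xi\cdot\ve^\perp/|\xi\cdot\ve|=\eta_2/|\eta_1|$ shows that the $\varphi_\pm$ of \refprop{P:Rot} and the corresponding $\widetilde{\varphi}_\pm$ built from $\widetilde{\rho}$ in \refprop{P:mu} coincide. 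Substituting $\widetilde{y}_2-\widetilde{x}_2=t$ and $\eta_1=\tau$ into \eqref{E:sg0} then produces the stated principal symbol. The only real obstacle is bookkeeping: confirming that the principal symbol, which a priori depends on the choice of phase function and half-density factors, is transported faithfully by the rotation once the identifications $\tau\leftrightarrow\eta_1$, $t\leftrightarrow\widetilde{y}_2-\widetilde{x}_2$, and $\varphi_\pm\leftrightarrow\widetilde{\varphi}_\pm$ are in place.
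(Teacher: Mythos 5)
Your proposal is correct and takes essentially the same approach as the paper: the paper's proof is precisely the change of variables $\eta=(\ve\cdot\xi,\ve^\perp\cdot\xi)$, $x'=(\ve\cdot x,\ve^\perp\cdot x)$, $y'=(\ve\cdot y,\ve^\perp\cdot y)$ — i.e. $x'=R^\top x$, $\xi=R\eta$ in your notation — followed by applying Proposition~\ref{P:mu} and transporting the conclusion back. Your write-up simply spells out more explicitly the bookkeeping that the paper leaves as ``translating the result back.''
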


\begin{proof}
The Proposition \ref{P:Rot} can be obtained from Propositions~\ref{P:mu} by a simple change of variables. Indeed, let $$\eta = (\ve \cdot \xi, \ve^\perp \cdot \xi), \quad x'= (\ve \cdot x,\ve^\perp \cdot x), \quad y' = (\ve \cdot y,\ve^\perp \cdot y).$$
and $\rho,\mu_\pm$ be defined by
$$\mu_{\pm \ve} (x,y) = \mu_\pm(x',y'), \quad \rho_\ve(\xi)= \rho(\eta).$$
By changing the variables in (\ref{E:mue}), we obtain
$$\mu_\pm(x',y') = \frac{1}{(2 \pi)^2}\intl_{\rN^2} e^{i (x'-y') \cdot \eta} |\eta|^m \, \rho (\eta) \, \bH(\pm \eta_2) \, d \eta.$$
Applying Propositions \ref{P:mu} for $\mu_\pm$ and translating the result back to $\mu_{\pm \ve}$, we finish the proof.
\end{proof}

\section{Proof of Theorem \ref{T:Main1}} \label{S:Proof}
\noindent Let us first divide the boundary of $W_\Phi$ into four rays. Namely, let $$\ve_1=-\ve_3= (\cos \Phi, \sin \Phi), \quad \ve_2=-\ve_4= (\cos \Phi, - \sin \Phi).$$
and for $j=1,\dots,4$:
\begin{eqnarray*} R_j &=& \{\xi: \, \xi= r\, \ve_j,~ r  > 0 \}. \end{eqnarray*}
It is obvious that $$R_1 \cup R_2 \cup R_3 \cup R_4 = \pdh W_{\Phi} \setminus \{0\}. $$
For $j=1,\dots, 4$, let $\rho_j \in C^\infty(\rN^2 \setminus 0)$ be homogeneous of degree zero such that $\rho_j =1$ in a (small) conic neighborhood of $R_j$. Moreover, $\rho_j$ is supported inside a small conic neighborhood of $R_j$ \footnote{This, in particular, implies $\supp (\rho_j) \cap \supp (\rho_k) =\{0\}$, for $j \neq k$.}. 

We can write:
\begin{eqnarray*} 
 \mu_m(x,y) &=& \frac{1}{(2 \pi)^2}\intl_{\rN^2} e^{i (x-y) \cdot \xi } |\xi|^m \, \kappa (\xi/|\xi|)\, \chi_{\Phi}(\xi) \, d \xi \\ &=&  \frac{1}{(2 \pi)^2}\intl_{\rN^2} e^{i (x-y) \cdot \xi } |\xi|^m \, \Big[1- \sum_{j=1}^4 \rho_j(\xi) \Big] \, \kappa (\xi/|\xi|)\, \chi_{\Phi}(\xi) \, d \xi \\ &+& \sum_{j=1}^4 \frac{1}{(2 \pi)^2}\intl_{\rN^2} e^{i (x-y) \cdot \xi } |\xi|^m \,  \rho_j(\xi) \, \kappa (\xi/|\xi|)\, \chi_{\Phi}(\xi) \, d \xi \\&=& \mu^0(x,y) + \sum_{j=1}^4 \mu^j(x,y).
\end{eqnarray*}

\noindent\underline{\bf Properties of $\mu^0$:} We notice that the function 
$$b(x,\xi) = |\xi|^m \, \Big[1- \sum_{j=1}^4 \rho_j(\xi) \Big] \, \kappa (\xi/|\xi|)\, \chi_{\Phi}(\xi)$$
is smooth in $\rN^2 \times (\rN^2 \setminus 0)$. It is, hence, clear that $b(x,\xi)$ satisfies the conditions in Lemma~\ref{L:PDO}. Therefore, $\mu^0 \in I^m(\Delta)$ with the symbol $\sg^0(x,\xi) \sim b(x,\xi)$. This, in particular, implies $$WF(\mu^0)' \subset \Delta.$$

\medskip

\noindent\underline{\bf Properties of $\mu^1$:} Since $\rho_1$ is supported in a small conic neighborhood of $R_1$, we have $$\chi_{\Phi}(\xi) = \bH(-\ve_1^\perp \cdot \xi), \quad \mbox{ for all } \xi \in \supp(\rho_1).$$
Therefore,
\begin{eqnarray*}
\mu^1(x,y) &=& \frac{1}{(2 \pi)^2}\intl_{\rN^2} e^{i (x-y) \cdot \xi } |\xi|^m \,  \kappa (\xi/|\xi|)\, \rho_1(\xi) \, \chi_{\Phi}(\xi) \, d \xi 
\\ &=& \frac{1}{(2 \pi)^2}\intl_{\rN^2} e^{i (x-y) \cdot \xi } |\xi|^m \, \kappa (\xi/|\xi|) \, \rho_1(\xi) \, \bH(- \ve_1^\perp \cdot \xi) \, d \xi.
\end{eqnarray*}
Applying Proposition~\ref{P:Rot}, we obtain $$WF(\mu^1)' \subset \Delta \cup \mC_1.$$
Moreover, due to Proposition~\ref{P:Rot}~a), near $\Delta \setminus \mC_1$, $\mu^1$ is microlocally in the space $I^m(\Delta)$ with the full symbol
\begin{eqnarray*} \sg^{1}(x,\xi) \sim  |\xi|^m \, \kappa (\xi/|\xi|) \, \rho_1(\xi) \, \chi_{\Phi}(\xi) .\end{eqnarray*}

\medskip

On the other hand, due to Proposition~\ref{P:Rot}~b), near $\mC_1 \setminus \Delta$, $\mu^1$ is microlcally in the space $I^{m-k-\frac{1}{2}}(\mC_1)$ with the principal symbol 
\begin{eqnarray} \label{E:psymb1} \sg^1_0(x, x+ t \, \ve_1^\perp, \tau) = \left\{\begin{array}{ll} - \frac{1}{\sqrt{2 \pi}}\frac{1}{(i \, t)^{k+1}} \, \varphi_{+}^{(k)}(0) \, \tau^{m-k},&\hskip 0 pt \tau>0, \\[6 pt] - \frac{1}{\sqrt{2 \pi}}\frac{1}{(i \, t)^{k+1}} \, \varphi_{-}^{(k)}(0) \, |\tau|^{m-k},&\hskip 0 pt \tau<0. \end{array} \right.
\end{eqnarray} given the phase function $\phi_1(x,y,\tau) = (x-y) \cdot \ve_1 \, \tau.$
Here, $\varphi_+$ is defined by
$$\varphi_\pm \big(\frac{\ve_1^\perp \cdot \xi}{|\ve_1 \cdot \xi|} \big) = \kappa(\frac{\xi}{|\xi|} \big) \, \rho_1(\xi) ,~\mbox{ for all } \pm \ve_1 \cdot \xi >0. $$
Direct calculations show that 
\begin{equation*} 
\varphi_-^{(k)}(0) = 0, \quad \varphi_+^{(k)}(0) = \varkappa^{(k)}(\Phi),
\end{equation*} where, we recall, $\varkappa$ is defined by $\varkappa(\phi) = \kappa(\cos \phi, \sin \phi)$. 
Therefore,
\begin{eqnarray*} \sg^1_0(x, x+ t \, \ve_1^\perp, \tau) = \left\{\begin{array}{ll} - \frac{1}{\sqrt{2 \pi}}\frac{\varkappa^{(k)}(\Phi)}{(i \, t)^{k+1}} \,  \tau^{m-k},&\hskip 0 pt \tau>0, \\[6 pt] 0. \end{array} \right.
\end{eqnarray*}

\medskip

\noindent\underline{\bf Properties of $\mu^3$:} Similarly to $\mu^1$, we obtain
$$WF(\mu^3)' \subset \Delta \cup \mC_1.$$

\medskip

\noindent Moreover, near $\Delta \setminus \mC_1$, $\mu^3$ is microlocally in the space $I^m(\Delta)$ with the full symbol
\begin{eqnarray*} \sg^{3}(x,\xi) \sim  |\xi|^m \, \kappa (\xi/|\xi|) \, \rho_3(\xi) \, \chi_{\Phi}(\xi) .\end{eqnarray*}
On the other hand, near $\mC_1 \setminus \Delta$, $\mu^3$ is microlcally in the space $I^{m-k-\frac{1}{2}}(\mC_1)$ with the principal symbol 
\begin{eqnarray*} \sg^3_0(x, x+ t \, \ve_1^\perp, \tau) = \left\{\begin{array}{ll} 0, \\ \frac{1}{\sqrt{2 \pi}}\frac{(-1)^k \varkappa^{(k)}(\Phi)}{(i \, t)^{k+1}} \,  |\tau|^{m-k},&\hskip 0 pt \tau>0, \end{array} \right.
\end{eqnarray*} given the phase function $\phi_1(x,y,\tau) = (x-y) \cdot \ve_1 \, \tau.$

\medskip

\noindent \underline{\bf Properties of $\mu^{1,3}=\mu^1+\mu^3$:} Adding up the above results for $\mu^1$ and $\mu^3$, we obtain for $\mu^{1,3} = \mu^1+ \mu^3$:  $$WF(\mu^{1,3})' \subset \Delta \cup \mC_1,$$
Moreover, 
\begin{itemize}
\item[i)] Near $\Delta \setminus \mC_1$, $\mu^{1,3}$ is microlocally in the space $I^m(\Delta)$ with the full symbol
\begin{eqnarray*} \sg^{1,3}(x,\xi) \sim  |\xi|^m \, \kappa (\xi/|\xi|) \, \big[ \rho_1(\xi) + \rho_3(\xi) \big] \chi_{\Phi}(\xi) .\end{eqnarray*}
\item[ii)] Near $\mC_1 \setminus \Delta$, $\mu^{1,3}$ is microlcally in the space $I^{m-k-\frac{1}{2}}(\mC_1)$ with the principal symbol 
\begin{eqnarray*}
\sg_0(x, x+ t \, \ve_1^\perp, \tau) = - \frac{1}
{\sqrt{2\pi}} \frac{\varkappa^{(k)} (\Phi)}{[\sgn (\tau)\, i \, t]^{k+1}}  \, |\tau|^{m-k},
\end{eqnarray*} given the phase function $\phi_1(x,y,\tau) = (x-y) \cdot \ve_1 \, \tau.$
\end{itemize}


\medskip

\noindent \underline{\bf Properties of $\mu^{2,4} = \mu^2 + \mu^4$:} Similarly to $\mu^{1,3}$, we obtain for $\mu^{2,4} = \mu^2 +\mu^4$: $$WF(\mu^{2,4})' \subset \Delta \cup \mC_2.$$
Moreover, 
\begin{itemize}
\item[i)] Near $\Delta \setminus \mC_2$, $\mu^{2,4}$ is microlocally in the space $I^m(\Delta)$ with the full symbol
\begin{eqnarray*} \sg^{2,4}(x,\xi) \sim  |\xi|^m \, \kappa (\xi/|\xi|) \, \big[ \rho_2(\xi) + \rho_4(\xi) \big] \chi_{\Phi}(\xi) .\end{eqnarray*}

\item[ii)] Near $\mC_2\setminus \Delta$, $\mu^{2,4}$ is microlocally in the space $I^{m-k-\frac{1}{2}}(\mC_2)$ with the principal symbol
\begin{eqnarray*}
\sg_0(x, x+ t \, \ve_2^\perp, \tau) = \frac{1}{\sqrt{2\pi}}\frac{\varkappa^{(k)} (- \Phi)}{[i \, \sgn (\tau) \, t]^{k+1} } \,  |\tau|^{m-k},
\end{eqnarray*} given the phase function $\phi_2(x,y,\tau) = (x-y) \cdot \ve_2 \, \tau$. 
\end{itemize}
\medskip

\noindent \underline{\bf Properties of $\mu_m$:} Adding up all the above characterizations of $\mu^0, \mu^{1,3},\mu^{2,4}$, we obtain Theorem~\ref{T:Main1}~a) \& b), and the inclusion \footnote{Here, we have used the inclusion $WF(\mu^1 + \mu^2) \subset WF(\mu^1) \cup WF(\mu^2)$.} \begin{equation*}  WF(\mu_m)' \subset \Delta \cup \mC_1 \cup \mC_2.\end{equation*}
Since $\sg(x,\xi) \sim 0$ if $\xi \not \in \cl(W_\Phi)$, we arrive to (see, e.g., \cite[Proposition 2.5.7]{hormander71fourier}):
\begin{equation*} WF(\mu) \subset \Delta_\Phi \cup \mC_1 \cup \mC_2.\end{equation*}
This finishes our proof.
\section*{Acknowledgment}

\noindent The author is grateful to Professor G. Uhlmann for introducing him to the theory of pseudo-differential operators with singular symbol, which inspires the main idea of the article. He is thankful to Professor T. Quinto for his kind attention to this work and the information about the closely related work by Professor A. Katsevich \cite{Kat-JMAA}. Helpful discussions with Dr J. Frikel (tomography) and Dr S. Eswarathasan (microlocal analysis) are also appreciated. 

\medskip

The author heartily thanks the anonymous referees for their comments and suggestions, which significantly helped improve the writing of this article.


\newcommand{\etalchar}[1]{$^{#1}$}
\def\dbar{\leavevmode\hbox to 0pt{\hskip.2ex \accent"16\hss}d}

\end{document}